\documentclass[12pt,reqno]{amsart}
\usepackage{amssymb,amsthm}
\usepackage{braket,caption,dsfont,enumerate,esint,float,mathdots,mathtools}
\usepackage{perpage,pdfpages,pgfplots,thmtools}
\usepackage[margin=2.2cm]{geometry}
\usepackage{tikz,tikz-3dplot}

\usepackage[T1]{fontenc}
\usepackage[english]{babel}
\usepackage{lmodern,microtype,hyperref}

\allowdisplaybreaks 
\pgfplotsset{compat=1.17}
\MakePerPage{footnote}

\DeclareMathOperator{\dist}{dist}

\DeclareMathOperator{\Rer}{Re}

\DeclareMathOperator{\Span}{span}

\DeclareMathOperator{\Vol}{Vol}

\newcommand{\C}{\mathbb{C}}

\newcommand{\Hc}{\mathcal{H}}

\newcommand{\HS}{\mathrm{HS}}

\newcommand{\Id}{\mathrm{I}}

\newcommand{\M}{\mathbb{M}}

\newcommand{\N}{\mathbb{N}}

\newcommand{\ol}{\overline}

\newcommand{\R}{\mathbb{R}}

\newcommand{\Sb}{\mathbb{S}}

\newcommand{\SH}{\mathbb{SH}}

\newcommand{\SO}{\mathrm{SO}}

\newcommand{\tr}{\mathrm{t}}

\theoremstyle{plain}
\newtheorem{thm}{Theorem}

\newtheorem{lemma}[thm]{Lemma}

\theoremstyle{definition}
\newtheorem*{defn}{Definition}
\newtheorem*{ex}{Example}

\theoremstyle{remark}

\numberwithin{equation}{section}

\title{Spherical harmonics with maximal $L^p$ norm growth}
\author{Xiaolong Han}
\email{xiaolong.han@csun.edu}
\address{Department of Mathematics, California State University, Northridge, CA 91330, USA}

\subjclass[2020]{33C55, 35P20, 58J50}

\keywords{Eigenfunction estimates, spherical harmonics, maximal $L^p$ norm growth, density, restricted invertibility}

\begin{document}
\begin{abstract}
In this paper, we show that for every $p>2$, there is an orthonormal basis of spherical harmonics on $\Sb^n$ with $n\ge2$ which achieves the maximal $L^p$ norm growth. More precisely, one basis achieves this growth for all $2<p\le\frac{2(n+1)}{n-1}$, and another for all $\frac{2(n+1)}{n-1}\le p\le\infty$. This extends the results in \cite{B2, Ha1} to all dimensions and all $p>2$, and improves the positive density subsequence there to a whole basis.
\end{abstract}

\maketitle

\section{Introduction}
Let $\Delta_\M$ be the non-negative Laplacian on an $n$-dimensional compact Riemannian manifold $(\M,g)$ without boundary. Let $u$ be a Laplacian eigenfunction such that $\Delta_\M u=\lambda^2u$. The $L^p$ estimates of eigenfunctions by H\"ormander \cite{Ho} ($p=\infty$) and Sogge \cite{So1, So2} ($2<p<\infty$) state that
\begin{equation}\label{eq:Sogge}
\|u\|_{L^p(\M)}\le C\lambda^{\sigma(p)}\|u\|_{L^2(\M)},
\end{equation}
where $C=C(p,\M)>0$ and
\[\sigma(p)=\begin{cases}
\frac{n-1}{4}-\frac{n-1}{2p}, & \text{if }2<p\le\frac{2(n+1)}{n-1}=p_n,\\
\frac{n-1}{2}-\frac np, & \text{if }p_n\le p\le\infty.
\end{cases}\]
Moreover, these estimates are sharp in terms of the growth rate as $\lambda\to\infty$ on the unit sphere $\Sb^n$ equipped with the round metric. Recall that Laplacian eigenfunctions on $\Sb^n$ are spherical harmonics, that is, harmonic and homogeneous polynomials in $\R^{n+1}$ restricted to $\Sb^n$. Throughout the paper, we use $u$ to denote both the spherical harmonic on $\Sb^n$ and its homogeneous extension to $\R^{n+1}$ when no confusion can arise.

Denote by $\SH_k^n$ the space of spherical harmonics of degree $k$ on $\Sb^n$. Then for each $u\in\SH_k^n$,
\[\Delta_{\Sb^n}u=k(k+n-1)u,\]
which means that $\lambda=\sqrt{k(k+n-1)}\approx k$ for $k\ge1$. We also record that
\begin{equation}\label{eq:dim}
N_k:=\dim\SH_k^n=\frac{2k^{n-1}}{(n-1)!}+O_n\left(k^{n-2}\right).
\end{equation}
See Sogge \cite[Section 3.4]{So4}. The maximal $L^p$ norm growth in \eqref{eq:Sogge} is saturated by the highest weight spherical harmonics for $2<p\le p_n$ and by the zonal harmonics for $p_n\le p\le\infty$:

\begin{ex}[Highest weight spherical harmonics]
Let $x=(x_1,\dots,x_{n+1})\in\R^{n+1}$. Define
\[q(x)=(x_1+ix_2)^k.\]
The restriction of $q$ to $\Sb^n$, still denoted by $q$, is in $\SH_k^n$, and is called a highest weight spherical harmonic. Following \cite{B2, Ha1}, we also call it a Gaussian beam. It is known that
\[\frac{\|q\|_{L^p(\Sb^n)}}{\|q\|_{L^2(\Sb^n)}}\approx k^{\frac{n-1}{4}-\frac{n-1}{2p}}.\]
See Sogge \cite[Theorem 4.1]{So1}. That is, highest weight spherical harmonics saturate the $L^p$ estimates \eqref{eq:Sogge} for all $2<p\le p_n$.
\end{ex}

\begin{ex}[Zonal harmonics]
Let $\{Y_j\}_{j=1}^{N_k}$ be an orthonormal basis of $\SH_k^n$. Consider the reproducing kernel in $\SH_k^n$:
\begin{equation}\label{eq:kernel}
K(x,y)=\sum_{j=1}^{N_k}Y_j(x)\ol{Y_j(y)}.
\end{equation}
The zonal harmonic with pole $x$ is defined as
\[Z_x(y)=\frac{K(y,x)}{\sqrt{K(x,x)}}\quad\text{for }y\in\Sb^n.\]
It is known that, for $p_n\le p\le\infty$,
\[\frac{\left\|Z_x\right\|_{L^p(\Sb^n)}}{\left\|Z_x\right\|_{L^2(\Sb^n)}}\approx k^{\frac{n-1}{2}-\frac np}.\]
See Sogge \cite[Theorem 4.1]{So1}. That is, zonal harmonics saturate the $L^p$ estimates \eqref{eq:Sogge} for all $p_n\le p\le\infty$.
\end{ex}

On $\Sb^2$, there are two Gaussian beams and one zonal harmonic in the standard basis of $\SH_k^2$. In view of $\dim\SH_k^2=2k+1$, these spherical harmonics have density zero in the standard basis of spherical harmonics in $L^2(\Sb^2)$. However, there is a great variety of eigenbases, and Sogge--Zelditch \cite{SZ3} asked whether there is a positive density subsequence of spherical harmonics (in some basis) which saturates the maximal $L^p$ norm growth. For $2<p\le p_2=6$, this question was answered by Bourgain \cite{B2} and, independently and by a different method, by the author \cite{Ha1}. More precisely, the following was proved in \cite{Ha1}.
\begin{thm}\label{thm:MaxSH2}
For all sufficiently large $k$, there is an orthonormal set $\{u_j\}_{j=1}^M\subset\SH_k^2$ such that $M\ge\left\lfloor\frac{2k+1}{400}\right\rfloor$ and
\[\left\|u_j\right\|_{L^p(\Sb^2)}\approx k^{\frac{1}{4}-\frac{1}{2p}}\quad\text{for all }j=1,\dots,M\text{ and }2<p\le6.\]
\end{thm}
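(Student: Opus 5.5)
The plan is to build the orthonormal set out of rotated Gaussian beams. For a rotation $R\in\SO(3)$, set $q_R=q\circ R/\|q\|_{L^2(\Sb^2)}$. Each $q_R$ is concentrated in a tube of width $\approx k^{-1/2}$ around the great circle $R^{-1}(\{x_3=0\})$. By the Highest weight example, each satisfies $\|q_R\|_{L^p}\approx k^{\frac14-\frac1{2p}}$ for $2<p\le6$.

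\emph{Step 1: choose the rotations.} I would pick $M_0\approx c\,k$ rotations $R_1,\dots,R_{M_0}$ whose great circles are well separated, meaning their normal vectors $\pm\nu_j$ are $\gtrsim k^{-1/2}$-separated on $\Sb^2$. Such a family has size up to $\approx k$. I then estimate the Gram matrix $G_{ij}=\langle q_{R_i},q_{R_j}\rangle$. Since $q$ is explicit, the inner product of two beams whose circles meet at angle $\theta$ can be computed (or estimated by stationary phase). Its modulus decays like $(1+k\theta^2)^{-1/2}$ or faster, with extra oscillation from the relative phase. Choosing also the rotation about each normal (the phase of the beam), I would aim to show that $G=I+E$ with $\|E\|_{\ell^2\to\ell^2}\le\frac12$ on a subfamily of size $\ge (2k+1)/C$.

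\emph{Step 2: extract an orthonormal set.} If $\|E\|$ is not small for the whole family, I would apply a restricted invertibility theorem (Bourgain--Tzafriri, or Spielman--Srivastava). For the normalized vectors $q_{R_j}$ with $\|\sum_j c_jq_{R_j}\|\le B\|c\|$, it yields a subset $\sigma$ of size $\ge c M_0/B^2$ on which $\|\sum_{j\in\sigma}c_jq_{R_j}\|\ge c'\|c\|$. The upper frame bound $B$ comes from a Schur test on $|G_{ij}|$, using the separation and the decay in Step 1. Then I set $u=G_\sigma^{-1/2}q_\sigma$ (L\"owdin orthonormalization). The symmetric square root keeps each $u_j$ close to $q_{R_j}$ in a controlled way: $u_j=\sum_i (G_\sigma^{-1/2})_{ij}q_{R_i}$ with $\|G_\sigma^{-1/2}\|,\|G_\sigma^{1/2}\|\lesssim1$.

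\emph{Step 3: $L^p$ bounds.} The upper bound $\|u_j\|_{L^p}\lesssim k^{\frac14-\frac1{2p}}$ holds for every unit $u_j\in\SH_k^2$ by \eqref{eq:Sogge}. The lower bound is the crux. Let $T_j$ be the $k^{-1/2}$-tube around the $j$th circle. I would show $\|u_j\|_{L^2(T_j)}\ge c$ for a suitable subset of indices. Then H\"older on $T_j$, which has area $\approx k^{-1/2}$, gives
\[\|u_j\|_{L^p}\ge |T_j|^{\frac1p-\frac12}\|u_j\|_{L^2(T_j)}\gtrsim k^{\frac14-\frac1{2p}}.\]
The mass lower bound follows if the diagonal entry $(G_\sigma^{-1/2})_{jj}$ is close to $1$ and the off-diagonal contributions carry little mass in $T_j$. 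Alternatively, one can argue by averaging: $\sum_j\|u_j\|^2_{L^2(T_j)}\ge c|\sigma|$, so a constant fraction of the indices satisfies the bound. Discarding the rest keeps $M\ge\lfloor(2k+1)/400\rfloor$ with a careful choice of constants.

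\emph{Main obstacle.} The hardest part is the quantitative almost-orthogonality estimate for $\langle q_{R_i},q_{R_j}\rangle$ uniformly in the geometry, in particular for nearly coincident circles. Related to this is the transfer of tube-localized mass through the orthonormalization. I would first try to compute $\langle q\circ R,q\rangle$ exactly, as a Wigner $d$-function $d^k_{kk}$-type coefficient, which gives a clean $|\cos(\theta/2)|^{2k}$-type decay in the angle between normals. That decay would make $E$ essentially diagonal-dominant and make Step 2 elementary.
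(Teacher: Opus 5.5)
Your main line is correct, but it is essentially the route of \cite{Ha1}, not the route the paper takes. The decisive input is the exact formula you anticipate at the end. With $w=e_1+ie_2$ one has $\langle q_R,q_S\rangle=\bigl(R^{-1}w\cdot\ol{S^{-1}w}/2\bigr)^k$. Its modulus is $\cos^{2k}(\beta/2)\le e^{-k\beta^2/4}$, where $\beta$ is the angle between the oriented normals $R^{-1}e_3$ and $S^{-1}e_3$; the phase about the normal plays no role.

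This Gaussian decay comes from cancellation, and it is essential. The bound $(1+k\theta^2)^{-1/2}$ only sees $|q_R||q_S|$: with it the Schur sums grow like $k^{1/2}$, and restricted invertibility would return only about $k^{1/2}$ vectors, i.e.\ density zero.

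With the exact formula, take normals $10k^{-1/2}$-separated; there are at least $k/50\ge(2k+1)/400$ of them. The Schur test gives $\|G-\Id\|\ll1$, hence $\|u_j-q_{R_j}\|_{L^2}^2=((\Id-G^{1/2})^2)_{jj}\le\|\Id-G\|^2\ll1$. Your tube-mass plus H\"older step then works for every $j$, with no discarding.

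The paper (Theorem \ref{thm:MaxSHlow} with $n=2$) instead computes no pairwise inner products at all:
\begin{enumerate}[(i)]
\item Schur's lemma gives $\int_{\SO(3)}Q_R\otimes Q_R\,dR=N_k^{-1}\Id$.
\item Restricted invertibility extracts $\lfloor\frac34N_k\rfloor$ beams whose Gram matrix $E$ is not close to $\Id$ but has $\mu_{\min}(E)$ bounded below.
\item The lower bound comes from the single scalar inequality $\langle u_j,Q_j\rangle=(E^{1/2})_{jj}\ge\mu_{\min}(E)^{1/2}$, combined with H\"older duality against $\|Q_j\|_{L^{p'}}\approx k^{-\sigma(p)}$.
\item Lemma \ref{thm:completion} then yields a whole basis, in all dimensions.
\end{enumerate}

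Your approach is more explicit and gives more structure: each $u_j$ is $L^2$-close to one beam and concentrates in its tube. However, near-orthogonality forces separation $\gg k^{-1/2}$, so it is intrinsically limited to small density.

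One more point: in your hedged fallback (restricted invertibility without a near-identity Gram matrix), neither the tube-mass claim nor the averaging claim is justified as stated. What is needed there is exactly the paper's bound $(G^{1/2})_{jj}\ge\mu_{\min}(G)^{1/2}$, i.e.\ that $\langle u_j,q_{R_j}\rangle$ is bounded below.
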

Hence, the spherical harmonics constructed in \cite{Ha1} have density at least $\frac{1}{400}$. The corresponding problem for $p>6$ was left open; see \cite[Problem 5]{Ha1}. The purpose of this paper is to answer this problem, and its higher-dimensional analogue, by establishing the existence of whole orthonormal bases of spherical harmonics which saturate the maximal $L^p$ norm growth on $\Sb^n$ for all $n\ge2$ and $p\ge p_n$:
\begin{thm}\label{thm:MaxSHhigh}
For each integer $k\ge1$, there is an orthonormal basis $\{u_j\}_{j=1}^{N_k}$ of $\SH_k^n$ such that
\[\left\|u_j\right\|_{L^p(\Sb^n)}\approx k^{\frac{n-1}{2}-\frac np}\quad\text{for all }j=1,\dots,N_k\text{ and }p_n\le p\le\infty.\]
Here, the implicit constants depend only on $n$ and $p$.
\end{thm}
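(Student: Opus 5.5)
We plan to build the basis from zonal harmonics with well-chosen poles and then orthonormalize. The upper bound $\|u\|_{L^p}\lesssim k^{\frac{n-1}{2}-\frac np}$ holds for every $L^2$-normalized $u\in\SH_k^n$ by \eqref{eq:Sogge}. So the whole task is the lower bound, and by H\"older it suffices to prove it at the single exponent $p=p_n$. Indeed, if $\|u\|_{L^2}=1$ and $\|u\|_{L^{p_n}}\gtrsim k^{\sigma(p_n)}$, then interpolating $L^{p_n}$ between $L^2$ and $L^p$ for $p>p_n$, together with the upper bound at $p=p_n$, forces the sharp lower bound at every $p\ge p_n$, including $p=\infty$.

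\textbf{Step 1 (poles).} We choose a maximally separated set of points $\{x_j\}_{j=1}^{N_k}$ on $\Sb^n$ so that the zonal harmonics $\{Z_{x_j}\}$ form a basis of $\SH_k^n$ with good frame properties. Such sets are the well-conditioned ``fundamental systems'' or interpolation points, with spacing $\approx k^{-1}$ since $N_k\approx k^{n}$ by \eqref{eq:dim}. The Gram matrix $G_{ij}=\langle Z_{x_i},Z_{x_j}\rangle=K(x_i,x_j)/\sqrt{K(x_i,x_i)K(x_j,x_j)}$ is a normalized Gegenbauer kernel. It decays like $(1+k\,d(x_i,x_j))^{-\frac{n-1}{2}}$, with oscillation, away from antipodes.

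\textbf{Step 2 (orthonormalize).} We set $u_j=\sum_i (G^{-1/2})_{ij}Z_{x_i}$, the L\"owdin symmetric orthonormalization. This gives an orthonormal basis of $\SH_k^n$. We then need to show that each $u_j$ still concentrates near $x_j$: on the cap $B(x_j,c/k)$, $|u_j|\gtrsim k^{\frac{n-1}{2}}$. That cap has volume $k^{-n}$, so this yields $\|u_j\|_{L^{p_n}}\gtrsim k^{\frac{n-1}{2}-\frac{n}{p_n}}$ directly.

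\textbf{Step 3 (main obstacle).} The hard part is controlling $G^{-1/2}$. The off-diagonal decay $d^{-\frac{n-1}{2}}$ is not summable over the $\approx d^{n-1}$ points at distance $d$, so Schur's test fails naively. We would instead exploit two facts. First, a Marcinkiewicz--Zygmund or Plancherel--P\'olya inequality for a $\delta/k$-net gives $\|G\|,\|G^{-1}\|\lesssim1$ in operator norm. Second, Jaffard's (or Gr\"ochenig's) theorem on off-diagonal decay of inverses transfers polynomial decay to $G^{-1/2}$; to get a summable rate we would use the faster decay obtained by averaging out the oscillation.

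If this approach fails, we would fall back on a restricted invertibility argument (Bourgain--Tzafriri or Spielman--Srivastava). That argument extracts from the rescaled family $\{N_k^{-1/2}K(\cdot,x)\}$ a large subset on which the family is nearly orthonormal. We would then cover all of $\SH_k^n$ by finitely many, $O_n(1)$, rotated copies and perform a block Gram--Schmidt. At each stage we keep the pointwise value near the pole by perturbative bounds: $|u_j(x_j)-Z_{x_j}(x_j)|\le \tfrac12 Z_{x_j}(x_j)$, combined with the gradient bound $|\nabla u|\lesssim k\cdot k^{\frac{n-1}{2}}$ to pass from the pole to the whole cap.
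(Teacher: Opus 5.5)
Your main route (Steps 1--3) has a genuine gap, and it starts from a misreading of \eqref{eq:dim}: $N_k\approx k^{n-1}$, not $k^n$. A maximally separated set of $N_k$ points on $\Sb^n$ has spacing $\approx k^{-(n-1)/n}\gg k^{-1}$. A $\delta/k$-net, on the other hand, has $\approx k^n\gg N_k$ points. Its zonal harmonics are linearly dependent in the $N_k$-dimensional space $\SH_k^n$, so their Gram matrix is singular. What a Marcinkiewicz--Zygmund/Plancherel--P\'olya inequality for such a net gives is frame bounds $\sum_jZ_{x_j}\otimes Z_{x_j}\approx\frac{M}{N_k}\Id$, never $\|G^{-1}\|\lesssim1$. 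The existence of exactly $N_k$ poles whose zonal harmonics form a uniformly well-conditioned basis (a Riesz basis of reproducing kernels) is a complete-interpolation-type statement that no standard result supplies, and you give no argument for it.

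Step 3 also targets something unnecessary: no off-diagonal decay of $G^{-1/2}$ is needed, and Jaffard-type theorems would not apply at decay rate $\frac{n-1}{2}$ anyway. Since $Z_{x_i}(x_j)=\sqrt{N_k}\,G_{ij}$ by \eqref{eq:ZZ}, your L\"owdin vectors satisfy $u_j(x_j)=\sqrt{N_k}\,(G^{-1/2}G)_{jj}=\sqrt{N_k}\,(G^{1/2})_{jj}\ge\sqrt{N_k}\,\mu_{\min}(G)^{1/2}$. So only a lower bound on the smallest eigenvalue of the Gram matrix matters. Theorem \ref{thm:RI} provides exactly this for a proportional subfamily when applied to a tight frame, e.g.\ zonal harmonics at a spherical $2k$-design, where $\sum_jZ_{x_j}\otimes Z_{x_j}=\frac{M_0}{N_k}\Id$ exactly (Lemma \ref{thm:frame}). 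That is the paper's argument.

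Your fallback uses the right tool but leaves the real difficulty, completing to a whole basis, unsolved. Theorem \ref{thm:RI} gives only a fixed proportion of $N_k$ vectors, with a lower bound on $\mu_{\min}$; it does not give near-orthonormality. If you Gram--Schmidt a later block against earlier ones, the new vectors lie in the orthogonal complement $V$ of the current span. Then $u_j(x_j)=\sqrt{N_k}\langle u_j,P_VZ_{x_j}\rangle\le\sqrt{N_k}\|P_VZ_{x_j}\|$, and since $\int_{\Sb^n}\|P_VZ_x\|^2\,dx=\dim V/N_k$, nothing keeps $\|P_VZ_{x_j}\|$ bounded below as $V$ shrinks. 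So your bound $|u_j(x_j)-Z_{x_j}(x_j)|\le\frac12Z_{x_j}(x_j)$ is unjustified. Rerunning Theorem \ref{thm:RI} on $\{P_VZ_{x_j}\}$ only yields $\mu_{\min}\gtrsim\dim V/N_k$ on a fixed fraction of $V$, so the last vectors get no useful peak.

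The missing idea is Lemma \ref{thm:completion}. Extract once $M\ge\frac{N_k}{2}$ orthonormal $u_j$ with $\langle u_j,Z_{x_j}\rangle\ge c$; tightness of the design frame allows $\delta^2=\frac34$ here. Pair each complementary basis vector $h_i$ with a distinct $u_i$, and rotate inside $\Span\{u_i,h_i\}$ so that both new vectors have inner product $\ge c/\sqrt2$ with $Z_{x_i}$. Your cap/gradient argument, which is correct, then applies to every basis element.

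Finally, your opening reduction to $p=p_n$ is false. H\"older only transfers a lower bound at $p_n$ to the weaker exponent $\frac{n-1}{4}-\frac{n-1}{2p}$ for $p>p_n$. The Gaussian beam $Q$ is a counterexample: it saturates at $p_n$, yet $\|Q\|_{L^\infty}\approx k^{\frac{n-1}{4}}$. The error is harmless only because a cap bound gives all $p\ge p_n$ at once.
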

Furthermore, we extend Theorem \ref{thm:MaxSH2} to higher dimensions and replace the positive density subsequence by a whole basis.
\begin{thm}\label{thm:MaxSHlow}
For each integer $k\ge1$, there is an orthonormal basis $\{u_j\}_{j=1}^{N_k}$ of $\SH_k^n$ such that
\[\left\|u_j\right\|_{L^p(\Sb^n)}\approx k^{\frac{n-1}{4}-\frac{n-1}{2p}}\quad\text{for all }j=1,\dots,N_k\text{ and }2<p\le p_n.\]
Here, the implicit constants depend only on $n$ and $p$.
\end{thm}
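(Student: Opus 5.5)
The plan is to reduce the lower bound to a duality statement against Gaussian beams, and then to build an orthonormal basis that correlates with beams. The upper bound $\|u_j\|_{L^p}\lesssim k^{\frac{n-1}{4}-\frac{n-1}{2p}}$ is just \eqref{eq:Sogge}, so only the lower bound needs proof. For an isotropic vector $\omega=a+ib\in\C^{n+1}$, with $a\perp b$ and $|a|=|b|=1$, let $q_\omega(x)=(\omega\cdot x)^k/\|(\omega\cdot x)^k\|_{L^2}$. This is a rotated highest weight harmonic. By the Example and H\"older duality,
\[\|q_\omega\|_{L^{p'}(\Sb^n)}\approx k^{-\frac{n-1}{4}+\frac{n-1}{2p}}.\]
Hence for any unit $u\in\SH_k^n$,
\[\|u\|_{L^p}\ge\frac{|\langle u,q_\omega\rangle|}{\|q_\omega\|_{L^{p'}}}\gtrsim|\langle u,q_\omega\rangle|\,k^{\frac{n-1}{4}-\frac{n-1}{2p}}.\]
It therefore suffices to construct an orthonormal basis $\{u_j\}_{j=1}^{N_k}$ together with beams $q_{\omega_j}$ such that $|\langle u_j,q_{\omega_j}\rangle|\ge c(n)>0$ for every $j$. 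This handles all $2<p\le p_n$ simultaneously, with constants depending only on $n$ and $p$.

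\textbf{Step 1 (a well-conditioned frame of beams).} The family $\{q_\omega\}$, with $\omega$ ranging over the isotropic quadric and weighted by $\SO(n+1)$-invariant measure, is a continuous tight frame of $\SH_k^n$, since $\SH_k^n$ is irreducible and Schur's lemma applies. I would discretize it using a maximal $Ck^{-1/2}$-separated set $\{\omega_m\}_{m=1}^M$ on the quadric modulo phase. The inner products satisfy $|\langle q_\omega,q_{\omega'}\rangle|\approx|\omega\cdot\ol{\omega'}/2|^k\le e^{-ck|\omega-\omega'|^2}$ in the appropriate quotient metric. A Schur test then bounds the Gram matrix $G$ in operator norm by a constant. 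Comparing the discrete sum with the invariant integral should give frame bounds $A\,\Id\le S=\sum_m|q_{\omega_m}\rangle\langle q_{\omega_m}|\le B\,\Id$ with $M\approx N_k$. Setting $f_m=S^{-1/2}q_{\omega_m}$ then produces a Parseval frame of $\SH_k^n$ with $\|f_m\|\approx1$. Moreover $\langle u,f_m\rangle=\langle S^{-1/2}u,q_{\omega_m}\rangle$, so a lower bound on pairings with $f_m$ transfers to pairings with beams up to constants. The bounded-norm-to-$L^{p'}$ issue is harmless here, because we only use $u\mapsto S^{-1/2}u$ inside $L^2$.

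\textbf{Step 2 (extracting a whole basis: restricted invertibility).} By the Bourgain--Tzafriri/Spielman--Srivastava restricted invertibility theorem, a subset $J$ with $|J|\ge cN_k$ exists on which $\{f_m\}_{m\in J}$ is a Riesz sequence with constants depending only on $n$. Gram--Schmidt, or better L\"owdin orthonormalization $G_J^{-1/2}$, then yields orthonormal $u_m$ with $|\langle u_m,f_m\rangle|\ge c$. This reproduces a positive density result like Theorem~\ref{thm:MaxSH2}.

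\textbf{Main obstacle.} The main obstacle is upgrading positive density to a whole basis. The plan is to iterate: project the frame onto the orthogonal complement $V$ of what has been built so far, using that for $u\in V$ one has $\langle u,f_m\rangle=\langle u,P_Vf_m\rangle$. The danger is that the projected norms $\|P_Vf_m\|$ average only $(\dim V/M)^{1/2}$, so the pairings degrade. To prevent this, I would exploit rotational symmetry. I would choose the first batch as an $\SO(n+1)$-well-distributed subfamily, for instance as orbits under a finite subgroup or via a partition of the net into $O(1)$ interleaved sub-nets, each of which is already nearly orthogonal by the Gaussian decay above. The aim is that after $O_n(1)$ rounds the remaining frame still has $\|P_Vf_m\|\gtrsim1$ for a proportion of the $m$ comparable to $\dim V$. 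If that fails, the fallback is to pick a Riesz subset of size about $N_k$ directly. This would be possible by choosing the net spacing so large that the Gram matrix satisfies $\|G-\Id\|<1/2$ and $M\ge N_k$. Whether those two conditions are compatible is exactly the counting problem I expect to be delicate.
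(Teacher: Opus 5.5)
Your reduction to H\"older duality against rotated beams is the same as the paper's. The proof, however, stops at a positive-density orthonormal set, and the step you call the main obstacle is left unresolved. The iteration is only a hope. The fallback cannot work as stated: for $M>N_k$ the $M\times M$ Gram matrix has rank at most $N_k$, so $\|G-\Id\|\ge1$. With $M=N_k$ and $\|G-\Id\|<\frac12$ you would have a Riesz basis made of beams, which is stronger than the theorem and is not delivered by a separation argument.

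The density loss comes from having frame bounds $A\,\Id\le S\le B\,\Id$ only up to constants. In the Spielman--Srivastava form with $Le_m=q_{\omega_m}$, one has $\|L\|_{\HS}^2=m$ and $\|L\|^2=\|S\|$. So you are guaranteed $|J|\ge\lfloor\delta^2m/\|S\|\rfloor$, which under your bounds is only about $\delta^2(A/B)N_k$. By contrast, the eigenvalue bound $(1-\delta)^2\|L\|_{\HS}^2/m=(1-\delta)^2$ does not involve $m$ at all.

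The first missing idea is near-tightness. Approximate the exact identity $\int_{\SO(n+1)}Q_R\otimes Q_R\,dR=N_k^{-1}\Id$, which you already have from Schur's lemma, by a Riemann sum with rational weights, repeating rotations as needed. This gives $\|S\|\le(1+\eta)m/N_k$. Since $m$ is irrelevant, no Gaussian decay, Schur test, or $M\approx N_k$ counting is needed. Fixing $\eta=\frac18$ and $\delta^2=\frac34(1+\eta)$ then yields $|J|\ge\lfloor\frac34N_k\rfloor\ge\lceil N_k/2\rceil$ and $\mu_{\min}(E)>(1-\delta)^2$, uniformly in $k$.

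Once you have more than half of a basis, no iteration is needed; this is the second missing ingredient (Lemma~\ref{thm:completion}). Complete $\{u_j\}_{j=1}^M$ by $h_1,\dots,h_{N_k-M}$ and pair $h_i$ with $u_i$. In the plane $\Span\{u_i,h_i\}$, let $s_i$ be the unit vector along the projection of $q_i$, so that $|\langle s_i,q_i\rangle|\ge|\langle u_i,q_i\rangle|\ge c$. Let $t_i$ be the unit vector orthogonal to $s_i$ in that plane, so that $\langle t_i,q_i\rangle=0$. Then $(s_i\pm t_i)/\sqrt2$ are orthonormal, and both pair with $q_i$ at size at least $c/\sqrt2$.

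Separately, your Step 2 transfer is not valid as written. A bound $|\langle u_m,f_m\rangle|\ge c$ with $f_m=S^{-1/2}q_{\omega_m}$ gives $\|u_m\|_{L^p}\gtrsim k^{\sigma(p)}$ only if $\|S^{-1/2}q_{\omega_m}\|_{L^{p'}}\lesssim k^{-\sigma(p)}$. Rewriting it as $\langle S^{-1/2}u_m,q_{\omega_m}\rangle$ bounds $\|S^{-1/2}u_m\|_{L^p}$, not $\|u_m\|_{L^p}$, and $L^2$ frame bounds give no $L^p$ control of $S^{\pm1/2}$. Drop the Parseval normalization and orthonormalize the beams themselves: set $u_j=\sum_l(E^{-1/2})_{jl}q_l$ with $E$ their Gram matrix. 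Then $\langle u_j,q_j\rangle=(E^{1/2})_{jj}\ge\mu_{\min}(E)^{1/2}$, and H\"older applies directly with $\|q_j\|_{L^{p'}}\approx k^{-\sigma(p)}$.
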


Taking the union of the bases in Theorem \ref{thm:MaxSHhigh} over all $k\ge1$, together with the normalized constant function, gives an orthonormal eigenfunction basis in $L^2(\Sb^n)$ which saturates the maximal $L^p$ norm growth for all $p_n\le p\le\infty$. Theorem \ref{thm:MaxSHlow} gives such a basis for all $2<p\le p_n$. We remark that, in our construction, these two bases are very likely different.

\subsection{Outline of the proof}
For $p\ge p_n$, we use zonal harmonics. If we choose $M$ normalized zonal harmonics $\{Z_{x_j}\}_{j=1}^M$, they have maximal $L^p$ norm growth, but they are not orthogonal to each other in $L^2(\Sb^n)$. As in \cite{Ha1}, instead of using the Gram--Schmidt process, we modify them simultaneously. Write $Z=(Z_{x_1},\dots,Z_{x_M})^\tr$, and set
\[E=\left(\left\langle Z_{x_j},Z_{x_l}\right\rangle\right)_{j,l=1}^M\quad\text{and}\quad u=E^{-\frac12}Z.\]
If $\{Z_{x_j}\}_{j=1}^M$ is linearly independent, then $E$ is positive definite, and the resulting spherical harmonics $u=(u_1,\dots,u_M)^\tr$ form an orthonormal set in $L^2(\Sb^n)$.

In \cite{Ha1}, where the building blocks are Gaussian beams, the matrices $E$ and $E^{-\frac12}$ are both shown to be strictly diagonally dominant if the density is small enough, and the $L^p$ norms of $u_j$ are estimated by the triangle inequality. In this paper, we proceed differently. For zonal harmonics, the deleted absolute row sums of $E$ cannot be controlled, as observed in \cite[Section 3]{Ha1}. Instead, it is enough to find a lower bound for the smallest eigenvalue $\mu_{\min}(E)$. Indeed, we shall show in \eqref{eq:peak} that
\[u_j(x_j)=\sqrt{N_k}\left(E^{\frac12}\right)_{jj}\ge\sqrt{N_k}\,\mu_{\min}(E)^{\frac12},\]
so that a lower bound for $\mu_{\min}(E)$ yields one for $u_j(x_j)$, and thus for $\|u_j\|_{L^\infty(\Sb^n)}$. 

To pass from the $L^\infty$ estimate to the $L^p$ ones, notice that, since a spherical harmonic of degree $k$ oscillates at the scale $k^{-1}$, a lower bound for $u_j$ at the single point $x_j$ propagates to a ball of radius $\approx k^{-1}$. For $p\ge p_n$, the $L^p$ norm of $u_j$ is bounded below by its $L^p$ norm on this ball, which gives the maximal growth.

To obtain an orthonormal set of $M\approx N_k$ spherical harmonics with maximal $L^p$ norm growth for $p\ge p_n$, it is therefore enough to find $M$ zonal harmonics for which $\mu_{\min}(E)$ is bounded from below uniformly in $k$. To this end, we use a spherical design, and then apply the restricted invertibility theorem of Bourgain--Tzafriri \cite{BT} in the form of Theorem \ref{thm:RI}.

Notice that the same argument does not give the maximal growth for $2<p<p_n$. Indeed, a ball of radius $\approx k^{-1}$ only gives the exponent $\frac{n-1}{2}-\frac np$, which is strictly smaller than $\frac{n-1}{4}-\frac{n-1}{2p}$ when $p<p_n$. Thus, for the low $p$ range, we use different building blocks, namely, the normalized real parts of Gaussian beams. Let $Q$ be one such spherical harmonic, and denote by $Q_R$ its rotation by $R\in\SO(n+1)$. We shall show that
\[\int_{\SO(n+1)}Q_R\otimes Q_R\,dR=\frac{1}{N_k}\Id,\]
where $dR$ is the normalized Haar measure on $\SO(n+1)$ and $Q_R\otimes Q_R$ is the rank one operator $v\mapsto\langle v,Q_R\rangle Q_R$. We approximate this integral by a finite sum and apply Theorem \ref{thm:RI}. This gives $M\approx N_k$ spherical harmonics $Q_1,\dots,Q_M$ whose matrix $E=(\langle Q_j,Q_l\rangle)$ has smallest eigenvalue bounded from below. Let $F=E^{-\frac12}$ and $u_j=\sum_lF_{jl}Q_l$. Then
\[\left\langle u_j,Q_j\right\rangle=\left(E^{\frac12}\right)_{jj}\ge\mu_{\min}(E)^{\frac12}.\]
For the low $p$ range, instead of estimating $u_j$ on a small ball, we use the $L^p$--$L^{p'}$ duality through H\"older's inequality. If $p'=\frac{p}{p-1}$, then
\[\left|\left\langle u_j,Q_j\right\rangle\right|\le\left\|u_j\right\|_{L^p(\Sb^n)}\left\|Q_j\right\|_{L^{p'}(\Sb^n)}.\]
Therefore, a lower bound $\|u_j\|_{L^p(\Sb^n)}\gtrsim k^{\sigma(p)}$ for $2<p\le p_n$ follows from a lower bound for $\mu_{\min}(E)$ together with
\[\|Q_j\|_{L^{p'}(\Sb^n)}\approx k^{-\sigma(p)}.\]
We remark that this duality argument does not give the maximal growth for $p>p_n$: it only gives the exponent $\frac{n-1}{4}-\frac{n-1}{2p}$, which is strictly smaller than $\frac{n-1}{2}-\frac np$ when $p>p_n$. 

To obtain a whole basis in either construction, it is enough to take $M\ge\frac{N_k}{2}$. Complete $\{u_j\}_{j=1}^M$ to an orthonormal basis and pair each of the additional vectors with a distinct $u_j$. By an orthogonal change of basis in each pair, we can arrange that both resulting vectors have inner product at least $\frac{c}{\sqrt2}$ with the corresponding $Z_{x_j}$ or $Q_j$, where $c>0$ is the lower bound for the original inner product. The pointwise estimate or the duality argument then applies to every vector in the resulting basis. We prove this elementary fact in Lemma \ref{thm:completion}. Consequently, it suffices to apply Theorem \ref{thm:RI} with parameters fixed independently of $k$, which keeps all the implicit constants uniform in $k$.

\subsection{Manifolds with maximal $L^p$ norm growth}
The $L^p$ norm estimates of eigenfunctions \eqref{eq:Sogge} are sharp on $\Sb^n$, and Theorems \ref{thm:MaxSHhigh} and \ref{thm:MaxSHlow} give whole eigenbases with maximal norm growth. However, these estimates are rarely sharp on a general manifold, and it is natural to ask on which manifolds they are. Following the terminology of \cite{SZ1}, we say that $(\M,g)$ has maximal $L^p$ growth if there is an orthonormal eigenfunction basis $\{u_j\}$ in $L^2(\M)$, with $\Delta_\M u_j=\lambda_j^2u_j$, such that
\[\limsup_{j\to\infty}\frac{\left\|u_j\right\|_{L^p(\M)}}{\lambda_j^{\sigma(p)}}>0.\]
Below, the eigenfunctions are always normalized in $L^2(\M)$.

For $p>p_n$, Sogge--Zelditch \cite{SZ1} and Sogge--Toth--Zelditch \cite{STZ} found necessary conditions for maximal $L^p$ growth, which involve the points at which the geodesics refocus. In particular, \cite[Theorem 1.4]{SZ1} shows that, for a generic metric $g$ on $\M$, $\|u\|_{L^\infty(\M)}=o\left(\lambda^{\frac{n-1}{2}}\right)$ for all eigenfunctions $u$, so that $(\M,g)$ does not have maximal $L^\infty$ growth. More recently, Canzani--Galkowski \cite{CG} gave dynamical conditions which guarantee quantitative improvements over \eqref{eq:Sogge} for $p>p_n$, and described the eigenfunctions which nearly saturate \eqref{eq:Sogge} in this range as finite sums of quasimodes approximating zonal harmonics. These results are in accordance with the example of zonal harmonics above: on $\Sb^n$, all the geodesics emanating from a given point return to it at the same time, and the zonal harmonic with that point as its pole concentrates there. We refer to \cite{CG, STZ, SZ1} for details and other related results.

For $2<p<p_n$, fewer results are known. Sogge \cite{So3} on compact surfaces, and Blair--Sogge \cite{BS} in higher dimensions, proved that a sequence of eigenfunctions $u$ satisfies $\|u\|_{L^p(\M)}=o\left(\lambda^{\sigma(p)}\right)$ if and only if the $L^2$ mass of $u$ in the $\lambda^{-\frac12}$-neighborhood of every unit length geodesic is $o(1)$, uniformly in the geodesic. Again, this is in accordance with the examples above: the highest weight spherical harmonics on $\Sb^n$ concentrate in such a neighborhood of a great circle. On a compact surface, Sogge--Zelditch \cite{SZ3} proved that if the set of periodic geodesics has measure zero, then every orthonormal eigenfunction basis contains a full density subsequence $\{u_{j_m}\}$ with $\|u_{j_m}\|_{L^p(\M)}=o(\lambda_{j_m}^{\sigma(p)})$ for $2<p<6$. Hence, on such a surface, no orthonormal eigenfunction basis contains a positive density subsequence whose $L^p$ norms saturate \eqref{eq:Sogge} for some $2<p<6$. In contrast, on $\Sb^2$, Theorem \ref{thm:MaxSHlow} provides an orthonormal eigenfunction basis all of whose elements saturate \eqref{eq:Sogge} for all $2<p\le6$. In this respect, the sphere is again extremal. This is the contrast which motivated the question of \cite{SZ3} recalled above.

\subsection{Spherical harmonics with minimal $L^p$ norm growth}
In the opposite direction to the questions investigated in this paper, one asks for the minimal $L^p$ growth of eigenfunctions, the strongest form of which is to find eigenfunctions that are uniformly bounded as the eigenvalues tend to infinity. Such examples exist on flat tori, namely the standard exponential functions. On $\Sb^3$ and on $\Sb^5$, regarded as the boundaries of the unit balls in $\C^2$ and in $\C^3$, Bourgain \cite{B1, B3} constructed uniformly bounded orthonormal bases of the spaces of homogeneous holomorphic polynomials, which are also spherical harmonics. See \cite{Ha3} for the description of the mass distribution of these uniformly bounded spherical harmonics on $\Sb^3$.

On more general compact K\"ahler manifolds, Shiffman \cite{Sh} and Marzo--Ortega-Cerd\`a \cite{MOC} proved that one can select a family of uniformly bounded orthonormal holomorphic sections, which implies the existence of uniformly bounded spherical harmonics on odd-dimensional spheres. The existence of uniformly bounded spherical harmonics is open on even-dimensional spheres; see \cite{Ha2} for this problem on $\Sb^2$ and its relation to well-distributed points on the sphere.

\subsection{Notation}
Throughout this paper, $A\lesssim B$ ($A\gtrsim B$) means $A\le cB$ ($A\ge cB$) for some constant $c>0$ depending only on $n$ and $p$; $A\approx B$ means $A\lesssim B$ and $B\lesssim A$; the implicit constants may vary from line to line.

\subsection{Organization of the paper}
In Section \ref{sec:pre}, we collect the facts used in the proofs: the $L^r$ norms of the highest weight spherical harmonics and their rotations, the reproducing kernel and the zonal harmonics, a gradient estimate, the restricted invertibility theorem, spherical designs, and a lemma on orthonormal bases; in Section \ref{sec:high}, we prove Theorem \ref{thm:MaxSHhigh} for $p\ge p_n$; in Section \ref{sec:low}, we prove Theorem \ref{thm:MaxSHlow} for $2<p\le p_n$.

\section{Preliminaries}\label{sec:pre}
In this section, we collect the terminology and facts used in the proofs of Theorems \ref{thm:MaxSHhigh} and \ref{thm:MaxSHlow}. From now on, $dy$ denotes the Riemannian volume form on $\Sb^n$, normalized to be a probability measure, and all the $L^r(\Sb^n)$ norms and the volumes $\Vol$ of subsets of $\Sb^n$ are taken with respect to it.

We also use real-valued spherical harmonics and regard $\SH_k^n$ as a real vector space of dimension $N_k$ with the inner product $\langle u,v\rangle=\int_{\Sb^n}uv\,dy$; in particular, the basis $\{Y_j\}_{j=1}^{N_k}$ in \eqref{eq:kernel} is taken to be real-valued, so that $K$ is real and symmetric. 

For $u\in\SH_k^n$, we write $u\otimes u$ for the rank one operator on $\SH_k^n$ given by $v\mapsto\langle v,u\rangle u$.

\subsection{Highest weight spherical harmonics}
In \cite{Ha1}, the Gaussian beams are complex-valued. Since Theorem \ref{thm:RI} is stated over $\R$, here we use their real parts. Recall the highest weight spherical harmonic $q(x)=(x_1+ix_2)^k$ from the Example above, and set
\[Q=\frac{\Rer q}{\left\|\Rer q\right\|_{L^2(\Sb^n)}}.\]
Then $Q\in\SH_k^n$ is real-valued and $\|Q\|_{L^2(\Sb^n)}=1$. We record the $L^r$ norms of $Q$.
\begin{lemma}\label{thm:HWnorm}
Let $1\le r<\infty$. Then
\[\left\|Q\right\|_{L^r(\Sb^n)}\approx k^{\frac{n-1}{4}-\frac{n-1}{2r}}.\]
Here, the implicit constants depend on $n$ and $r$.
\end{lemma}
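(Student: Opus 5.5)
Since the integrands depend only on $|x_1+ix_2|$ and the polar angle $\theta$ in the $(x_1,x_2)$-plane, the plan is to compute the norms directly in these coordinates. Write $x_1+ix_2=\rho e^{i\theta}$ with $\rho=\sqrt{x_1^2+x_2^2}\in[0,1]$ on $\Sb^n$. Then $\Rer q=\rho^k\cos(k\theta)$. For any $1\le r<\infty$ we have
\[
\int_{\Sb^n}|\Rer q|^r\,dy=c_r\int_{\Sb^n}\rho^{kr}\,dy,
\qquad c_r=\frac{1}{2\pi}\int_0^{2\pi}|\cos(k\theta)|^r\,d\theta=\frac{1}{2\pi}\int_0^{2\pi}|\cos\phi|^r\,d\phi>0,
\]
and $c_r$ is independent of $k$. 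To justify the factorization, I will parametrize $\Sb^n$ (for $n\ge2$) as $x=(\rho\cos\theta,\rho\sin\theta,\sqrt{1-\rho^2}\,\omega)$ with $\omega\in\Sb^{n-2}$. In these coordinates the normalized measure is $dy=C_n\,\rho(1-\rho^2)^{\frac{n-3}{2}}\,d\rho\,d\theta\,d\omega$. The density does not depend on $\theta$, so the $\theta$-integral splits off. For $n=2$ the factor $\omega$ ranges over $\Sb^0$, two points, and the formula is the same with $(1-\rho^2)^{-1/2}$, which is still integrable.

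So everything reduces to the one-dimensional integral
\[
I(s):=\int_0^1\rho^{s}\,\rho(1-\rho^2)^{\frac{n-3}{2}}\,d\rho=\tfrac12 B\!\left(\tfrac{s}{2}+1,\tfrac{n-1}{2}\right),
\]
obtained by substituting $t=\rho^2$. Stirling's formula gives $B(a,b)\approx a^{-b}$ as $a\to\infty$ with $b$ fixed, so $I(s)\approx s^{-\frac{n-1}{2}}$ for $s\ge1$. The constants depend only on $n$, and for small $s$ the estimate is trivial because $I$ is continuous and positive on bounded sets. This intuitively says that $\rho^{s}$ is concentrated in the $s^{-1/2}$-neighborhood of the great circle $\{\rho=1\}$, which has measure $\approx s^{-\frac{n-1}{2}}$.

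Putting this together,
\[
\|\Rer q\|_{L^r}^r\approx c_r\,(kr)^{-\frac{n-1}{2}}
\qquad\text{and}\qquad
\|\Rer q\|_{L^2}^2\approx c_2\,(2k)^{-\frac{n-1}{2}}.
\]
Hence
\[
\|Q\|_{L^r}=\frac{\|\Rer q\|_{L^r}}{\|\Rer q\|_{L^2}}\approx k^{-\frac{n-1}{2r}+\frac{n-1}{4}},
\]
with constants depending on $n$ and $r$. For $k\ge1$ the argument $kr\ge r\ge1$ lies in the range where the Beta asymptotics hold uniformly.

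The only real care needed is the coordinate formula for the measure and the uniform Beta-function asymptotics. The rest is routine, so I expect no serious obstacle. The key point is that averaging $|\cos(k\theta)|^r$ over $\theta$ gives a $k$-independent constant, so taking the real part costs nothing.
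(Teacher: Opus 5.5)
Your proposal is correct and follows essentially the same route as the paper: the coordinates $(\rho\cos\theta,\rho\sin\theta,\sqrt{1-\rho^2}\,\omega)$ with density $\rho(1-\rho^2)^{\frac{n-3}{2}}$, the $k$-independence of the $\theta$-integral of $|\cos(k\theta)|^r$, the reduction to $\frac12 B\left(\frac{kr}{2}+1,\frac{n-1}{2}\right)\approx k^{-\frac{n-1}{2}}$, and normalization by the $L^2$ norm. Your extra remarks (the substitution $\phi=k\theta$, the integrability of the $n=2$ density, and uniformity of the Beta asymptotics for $kr\ge1$) only spell out details the paper leaves implicit.
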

\begin{proof}
Write
\[x=\left(\rho\cos\theta,\rho\sin\theta,\sqrt{1-\rho^2}\,\omega\right),\quad\text{where }0\le\rho\le1, 0\le\theta<2\pi,\omega\in\Sb^{n-2}.\]
Up to a constant depending only on $n$, the volume form on $\Sb^n$ is
\[\rho\left(1-\rho^2\right)^{\frac{n-3}{2}}\,d\rho d\theta d\omega,\]
where $d\omega$ is the volume form on $\Sb^{n-2}$ (the counting measure on $\Sb^0=\{\pm1\}$ if $n=2$). Since $\Rer q(x)=\rho^k\cos(k\theta)$ and $\int_0^{2\pi}|\cos(k\theta)|^r\,d\theta$ is independent of $k$, we have
\[\left\|\Rer q\right\|_{L^r(\Sb^n)}^r\approx\int_0^1\rho^{kr+1}\left(1-\rho^2\right)^{\frac{n-3}{2}}\,d\rho=\frac12B\left(\frac{kr}{2}+1,\frac{n-1}{2}\right)\approx k^{-\frac{n-1}{2}},\]
where $B$ is the Beta function and we substituted $s=\rho^2$. Hence, $\left\|\Rer q\right\|_{L^r(\Sb^n)}\approx k^{-\frac{n-1}{2r}}$ for all $1\le r<\infty$. In particular, $\left\|\Rer q\right\|_{L^2(\Sb^n)}\approx k^{-\frac{n-1}{4}}$, which proves the lemma.
\end{proof}

For $R\in\SO(n+1)$, set
\[Q_R(x)=Q\left(R^{-1}x\right).\]
Since $\Delta_{\Sb^n}$ and $dy$ are invariant under rotations, $Q_R\in\SH_k^n$, and $\|Q_R\|_{L^r(\Sb^n)}=\|Q\|_{L^r(\Sb^n)}$ for all $r$.

We shall also use the following fact. Let $dR$ be the normalized Haar measure on $\SO(n+1)$.
\begin{lemma}\label{thm:HWframe}
We have
\[\int_{\SO(n+1)}Q_R\otimes Q_R\,dR=\frac{1}{N_k}\Id\]
as an operator on $\SH_k^n$. Consequently, for every $\eta>0$, there are $M_0\ge N_k$ rotations $R_1,\dots,R_{M_0}$, not necessarily distinct, such that
\begin{equation}\label{eq:HWframe}
\left\|\sum_{j=1}^{M_0}Q_{R_j}\otimes Q_{R_j}\right\|\le(1+\eta)\frac{M_0}{N_k}.
\end{equation}
Here, $\|\cdot\|$ is the operator norm on $\SH_k^n$.
\end{lemma}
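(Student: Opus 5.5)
The identity follows from Schur's lemma, and the finite approximation from a density (weak-$*$) argument applied to the integral.

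\emph{The identity.} Let
\[T=\int_{\SO(n+1)}Q_R\otimes Q_R\,dR,\]
an operator on the finite-dimensional real space $\SH_k^n$. Write $\pi(S)v=v\circ S^{-1}$. Then $Q_{SR}=\pi(S)Q_R$, and $\pi(S)$ is orthogonal, so
\[\pi(S)\,(Q_R\otimes Q_R)\,\pi(S)^{-1}=Q_{SR}\otimes Q_{SR}.\]
By left invariance of the Haar measure, $T$ commutes with every $\pi(S)$. The operator $T$ is also symmetric and positive semidefinite.

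The space $\SH_k^n$ is an irreducible $\SO(n+1)$-representation, and it remains irreducible over $\R$, since it is absolutely irreducible (class one). A real symmetric operator commuting with the action has eigenspaces that are invariant subspaces. By irreducibility, $T=c\,\Id$.

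To find $c$, take traces. Since $\operatorname{tr}(Q_R\otimes Q_R)=\|Q_R\|_{L^2}^2=1$, we get $cN_k=1$, so $c=1/N_k$.

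\emph{The finite approximation.} The map $R\mapsto Q_R\otimes Q_R$ is continuous from the compact group into the finite-dimensional space of symmetric operators. So the integral is a limit of Riemann-type sums. Concretely, I would partition $\SO(n+1)$ into finitely many Borel pieces $A_1,\dots,A_L$ of small diameter and pick $S_i\in A_i$. By uniform continuity,
\[\Bigl\|T-\sum_i|A_i|\,Q_{S_i}\otimes Q_{S_i}\Bigr\|\le\frac{\eta}{2N_k}.\]

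The weights $|A_i|$ are real, so next I approximate them by rationals $m_i/M_0$ with common denominator $M_0$. Taking $M_0$ large (in particular $M_0\ge N_k$), this costs an error of at most $\frac{\eta}{2N_k}$, using $\|Q_S\otimes Q_S\|=1$ and $\sum_i\bigl||A_i|-m_i/M_0\bigr|$ small. I then list each $S_i$ exactly $m_i$ times; this is why the rotations need not be distinct, and it gives $R_1,\dots,R_{M_0}$.

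Combining the two errors,
\[\Bigl\|\frac1{M_0}\sum_jQ_{R_j}\otimes Q_{R_j}-\frac1{N_k}\Id\Bigr\|\le\frac{\eta}{N_k}.\]
Hence
\[\Bigl\|\sum_jQ_{R_j}\otimes Q_{R_j}\Bigr\|\le(1+\eta)\frac{M_0}{N_k},\]
which is \eqref{eq:HWframe}. Alternatively, one could use a random sampling or law of large numbers argument, but the deterministic one suffices.

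\emph{Main obstacle.} The only real point is irreducibility over $\R$, so that Schur's lemma yields a scalar. This is handled by the eigenspace argument for the symmetric operator $T$, together with the classical fact that $\SH_k^n$ is irreducible under $\SO(n+1)$. For $n\ge2$ this holds because the zonal function is, up to scalar, the unique $\SO(n)$-invariant vector and it is cyclic.
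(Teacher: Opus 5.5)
Your proposal matches the paper's proof essentially step for step. Commutation with the rotation action, the eigenspace argument for the symmetric operator $T$, and irreducibility of the real space $\SH_k^n$ give $T=c\Id$, and the trace gives $c=N_k^{-1}$; the finite sum then comes from a partition into small Borel sets, rational approximation of the weights, and repetition of rotations. Your justification of irreducibility (unique $\SO(n)$-invariant vector, which is cyclic) differs only cosmetically from the paper's appeal to the complexification. The one bookkeeping point to make explicit is that the rationals $m_i/M_0$ must sum to $1$, as in the paper, so that $\sum_i m_i=M_0$ and listing each $S_i$ exactly $m_i$ times really yields $M_0$ rotations.
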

\begin{proof}
For the first statement, see Jakobson--Zelditch \cite[Section 3]{JZ}. We provide a short proof for completeness. Denote the operator on the left-hand side of the first equality by $T$. For $S\in\SO(n+1)$, the map $U_S:u\mapsto u\left(S^{-1}\cdot\right)$ is orthogonal on $\SH_k^n$, and $U_SQ_R=Q_{SR}$. Hence,
\[U_S\left(Q_R\otimes Q_R\right)U_S^{-1}=Q_{SR}\otimes Q_{SR},\]
and the invariance of $dR$ gives $U_STU_S^{-1}=T$. That is, the symmetric operator $T$ commutes with all rotations, so each eigenspace of $T$ is invariant under rotations. Since $n\ge2$, the real vector space $\SH_k^n$ is irreducible under the action of $\SO(n+1)$, because its complexification, the space of complex-valued spherical harmonics of degree $k$, is irreducible. Therefore, $T=c\Id$ for some $c\in\R$. Taking the trace and using $\|Q_R\|_{L^2(\Sb^n)}=1$, we obtain $c=N_k^{-1}$. 

For the second statement, since $\SO(n+1)$ is compact and $R\mapsto Q_R\otimes Q_R$ is continuous, we approximate the integral in operator norm by a finite sum $\sum_ja_jQ_{R_j}\otimes Q_{R_j}$ (for instance, by partitioning $\SO(n+1)$ into finitely many Borel sets of small diameter and positive measure), where $a_j>0$ and $\sum_ja_j=1$, such that
\[\left\|\sum_ja_jQ_{R_j}\otimes Q_{R_j}-\frac{1}{N_k}\Id\right\|<\frac{\eta}{2N_k}.\]
We then approximate the $a_j$ by positive rational numbers $a_j'$ with $\sum_ja_j'=1$ and $\sum_j|a_j-a_j'|<\frac{\eta}{2N_k}$. Since $\|Q_R\otimes Q_R\|=1$, replacing $a_j$ by $a_j'$ changes the finite sum by less than $\frac{\eta}{2N_k}$ in operator norm. Writing $a_j'=\frac{m_j}{M_0}$ with $m_j\in\N$ and $M_0=\sum_jm_j$, repeating each rotation $R_j$ exactly $m_j$ times, and relabeling, we obtain
\[\left\|\frac{1}{M_0}\sum_{j=1}^{M_0}Q_{R_j}\otimes Q_{R_j}-\frac{1}{N_k}\Id\right\|<\frac{\eta}{N_k}.\]
This gives \eqref{eq:HWframe}. By repeating each rotation the same number of times if necessary, we may assume that $M_0\ge N_k$.
\end{proof}

\subsection{Zonal harmonics}
Let $\{Y_j\}_{j=1}^{N_k}$ be a real-valued orthonormal basis of $\SH_k^n$ and $K$ be the reproducing kernel as in \eqref{eq:kernel}. Then $K$ does not depend on the choice of the basis and 
\[u(x)=\int_{\Sb^n}K(x,y)u(y)\,dy\quad\text{for all }u\in\SH_k^n\text{ and }x\in\Sb^n.\]
For $R\in\SO(n+1)$, since $\Delta_{\Sb^n}$ and $dy$ are invariant under rotations, $\{Y_j(R^{-1}\cdot)\}_{j=1}^{N_k}$ is also an orthonormal basis of $\SH_k^n$, so $K(R^{-1}x,R^{-1}y)=K(x,y)$. Since $\SO(n+1)$ acts transitively on $\Sb^n$, the function $x\mapsto K(x,x)$ is constant. Hence,
\[K(x,x)=\int_{\Sb^n}K(y,y)\,dy=\sum_{j=1}^{N_k}\left\|Y_j\right\|_{L^2(\Sb^n)}^2=N_k\quad\text{for all }x\in\Sb^n.\]
Therefore, the zonal harmonic 
\[Z_x(y)=\frac{K(y,x)}{\sqrt{K(x,x)}}=\frac{K(y,x)}{\sqrt{N_k}}\] 
satisfies $\|Z_x\|_{L^2(\Sb^n)}=1$ and
\begin{equation}\label{eq:eval}
\left\langle u,Z_x\right\rangle=\frac{u(x)}{\sqrt{N_k}}\quad\text{for all }u\in\SH_k^n.
\end{equation}
By the Cauchy--Schwarz inequality,
\[|u(x)|=\left|\sqrt{N_k}\left\langle u,Z_x\right\rangle\right|\le\sqrt{N_k}\|u\|_{L^2(\Sb^n)}\left\|Z_x\right\|_{L^2(\Sb^n)}=\sqrt{N_k}\|u\|_{L^2(\Sb^n)}\quad\text{for all }x\in\Sb^n.\]
Hence, for $u\in\SH_k^n$,
\begin{equation}\label{eq:sup}
\|u\|_{L^\infty(\Sb^n)}\le\sqrt{N_k}\,\|u\|_{L^2(\Sb^n)}.
\end{equation}
This is H\"ormander's estimate \eqref{eq:Sogge} on $\Sb^n$ when $p=\infty$. Moreover, taking $u=Z_y$ in \eqref{eq:eval},
\begin{equation}\label{eq:ZZ}
Z_y(x)=\sqrt{N_k}\left\langle Z_y,Z_x\right\rangle.
\end{equation}
In particular, $Z_x(x)=\sqrt{N_k}$, so each zonal harmonic $Z_x$ saturates \eqref{eq:sup} at its pole $x$.

\subsection{A gradient estimate}
The following estimate on the gradient of $u\in\SH_k^n$ allows us to pass from the value of $u$ at a point to its values at neighboring points at distance $\lesssim k^{-1}$. On general compact manifolds, the corresponding estimate follows from the local Weyl law; see Sogge--Zelditch \cite[Section 1]{SZ2}. On $\Sb^n$, we include a short proof based on the rotational invariance used above.

\begin{lemma}\label{thm:grad}
Let $u\in\SH_k^n$. Then
\[\left\|\nabla u\right\|_{L^\infty(\Sb^n)}\le\sqrt{k(k+n-1)N_k}\,\|u\|_{L^2(\Sb^n)}\le Ck^{\frac{n+1}{2}}\|u\|_{L^2(\Sb^n)}.\]
Here, $C=C(n)\ge1$.
\end{lemma}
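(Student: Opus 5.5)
Since $\nabla u(x)$ is a linear functional of $u$, I would represent it through the reproducing kernel and apply Cauchy--Schwarz. Fix $x\in\Sb^n$ and a unit tangent vector $v\in T_x\Sb^n$. Differentiating the identity $u(x)=\sum_j\langle u,Y_j\rangle Y_j(x)$ in the direction $v$ gives
\[\partial_vu(x)=\sum_{j=1}^{N_k}\langle u,Y_j\rangle\,\partial_vY_j(x).\]
Cauchy--Schwarz in $j$ then gives
\[|\partial_vu(x)|\le\|u\|_{L^2(\Sb^n)}\Big(\sum_j|\partial_vY_j(x)|^2\Big)^{1/2}\le\|u\|_{L^2(\Sb^n)}\Big(\sum_j|\nabla Y_j(x)|^2\Big)^{1/2}.\]
Taking the supremum over unit $v$ bounds $|\nabla u(x)|$ by the same quantity.

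Next I would show that the function $x\mapsto\sum_j|\nabla Y_j(x)|^2$ is constant. Since $\{Y_j(R^{-1}\cdot)\}$ is again an orthonormal basis for each $R\in\SO(n+1)$, and the sum is independent of the choice of basis (it equals $\nabla_x\cdot\nabla_yK(x,y)|_{y=x}$), the sum is rotation invariant. Rotations are isometries, so the gradient norms transform correctly. By transitivity, the sum is constant, exactly as in the argument for $K(x,x)=N_k$.

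The constant then equals its average. Integrating by parts on the closed manifold gives
\[\int_{\Sb^n}\sum_j|\nabla Y_j|^2\,dy=\sum_j\langle\Delta_{\Sb^n}Y_j,Y_j\rangle=k(k+n-1)N_k.\]
This yields the first inequality. The second follows from \eqref{eq:dim}, since $k(k+n-1)\lesssim k^2$ and $N_k\lesssim k^{n-1}$ for $k\ge1$, so the square root is at most $Ck^{\frac{n+1}{2}}$.

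The only mildly delicate point is the invariance step: checking that the gradient sum is independent of the orthonormal basis (an orthogonal change of basis preserves $\sum_j\nabla Y_j(x)\otimes\nabla Y_j(x)$) and that pulling back by an isometry preserves pointwise gradient norms. Everything else is routine.
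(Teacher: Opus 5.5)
Your proposal is correct and follows essentially the same route as the paper: show that $G(x)=\sum_j|\nabla Y_j(x)|^2$ is independent of the basis and invariant under rotations, hence constant and equal to its average $k(k+n-1)N_k$ by integration by parts, and then apply Cauchy--Schwarz to the expansion $u=\sum_j\langle u,Y_j\rangle Y_j$. The differences are cosmetic. You do the Cauchy--Schwarz step first, and you identify $G$ with $\nabla_x\cdot\nabla_yK(x,y)|_{y=x}$ as an alternative justification of basis independence, which the paper checks via an orthogonal change of basis.
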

\begin{proof}
Let $\{Y_j\}_{j=1}^{N_k}$ be a real-valued orthonormal basis of $\SH_k^n$. Set
\[G(x)=\sum_{j=1}^{N_k}\left|\nabla Y_j(x)\right|^2\quad\text{for }x\in\Sb^n.\]
Any other real-valued orthonormal basis of $\SH_k^n$ is of the form $\{\sum_lO_{jl}Y_l\}_{j=1}^{N_k}$ for an orthogonal matrix $O$, so $G$ does not depend on the choice of the basis. For $R\in\SO(n+1)$, applying this to the basis $\{Y_j(R^{-1}\cdot)\}_{j=1}^{N_k}$ and using that $R$ is an isometry of $\Sb^n$, we obtain $G(R^{-1}x)=G(x)$. Hence, $G$ is constant, and
\[G(x)=\sum_{j=1}^{N_k}\int_{\Sb^n}\left|\nabla Y_j(y)\right|^2\,dy=\sum_{j=1}^{N_k}\left\langle\Delta_{\Sb^n}Y_j,Y_j\right\rangle=k(k+n-1)N_k\quad\text{for all }x\in\Sb^n.\]
Now write $u=\sum_ja_jY_j$, so that $\sum_ja_j^2=\|u\|_{L^2(\Sb^n)}^2$. For $x\in\Sb^n$ and a unit tangent vector $\xi\in T_x\Sb^n$, the Cauchy--Schwarz inequality gives
\[\left|\nabla u(x)\cdot\xi\right|=\left|\sum_{j=1}^{N_k}a_j\,\nabla Y_j(x)\cdot\xi\right|\le\|u\|_{L^2(\Sb^n)}\sqrt{G(x)}=\sqrt{k(k+n-1)N_k}\,\|u\|_{L^2(\Sb^n)}.\]
Taking the supremum over $\xi$ and $x$ gives the first inequality. The second one follows from \eqref{eq:dim}.
\end{proof}

\subsection{Restricted invertibility}
The following theorem of Bourgain--Tzafriri \cite{BT} provides a large well-conditioned subset of a given set of vectors. We state it in the form of Spielman--Srivastava \cite[Theorem 2]{SS}. Here, $\|L\|$ and $\|L\|_{\HS}$ are the operator norm and the Hilbert--Schmidt norm of $L$, respectively.

\begin{thm}[Restricted invertibility]\label{thm:RI}
Let $0<\delta<1$ and $L:\R^d\to\R^d$ be a linear map. Suppose that $v_1,\dots,v_m\in\R^d$ satisfy $\sum_{j=1}^mv_jv_j^\tr=\Id$, where $\Id$ is the identity matrix. Then there is a subset $J\subset\{1,\dots,m\}$ with
\[|J|\ge\left\lfloor\frac{\delta^2\|L\|_{\HS}^2}{\|L\|^2}\right\rfloor\]
such that $\{Lv_j\}_{j\in J}$ is linearly independent and
\[\mu_{\min}\left(\sum_{j\in J}\left(Lv_j\right)\left(Lv_j\right)^\tr\right)>\frac{(1-\delta)^2\|L\|_{\HS}^2}{m},\]
where $\mu_{\min}$ is computed on $\Span\{Lv_j\}_{j\in J}$.
\end{thm}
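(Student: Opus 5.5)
Since the paper quotes this theorem from Bourgain--Tzafriri \cite{BT} in the form of Spielman--Srivastava \cite[Theorem 2]{SS}, in the paper itself I would simply cite it. If a self-contained argument were wanted, I would reproduce the Spielman--Srivastava barrier argument, which builds $J$ greedily one vector at a time.

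\textbf{Setup.} Write $w_j=Lv_j$. The hypothesis $\sum_j v_jv_j^\tr=\Id$ gives the two averaging identities
\[\sum_{j=1}^m w_jw_j^\tr=LL^\tr\quad\text{and}\quad\sum_{j=1}^m|w_j|^2=\|L\|_{\HS}^2.\]
I will construct matrices $A_t=\sum_{s\le t}w_{j_s}w_{j_s}^\tr$ of rank $t$ and barriers $b_0>b_1>\dots>b_t>0$. The invariant is that all $t$ nonzero eigenvalues of $A_t$ exceed $b_t$. I would track the lower barrier potential
\[\Phi_b(A)=\operatorname{Tr}\left(L^\tr(A-b\Id)^{-1}L\right)\]
and maintain $\Phi_{b_t}(A_t)\le\Phi_{b_0}(0)$. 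Note that $\Phi_{b_0}(0)=-\|L\|_{\HS}^2/b_0$. At each step the barrier drops by a fixed amount, $b_{t+1}=b_t-\varepsilon$. The choice of $b_0$ and $\varepsilon$ is fixed at the end so that after $T=\lfloor\delta^2\|L\|_{\HS}^2/\|L\|^2\rfloor$ steps one still has $b_T>(1-\delta)^2\|L\|_{\HS}^2/m$. Linear independence of the chosen $w_j$ is automatic, since each step raises the rank by one.

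\textbf{One step.} Set $b'=b-\varepsilon$. By Sherman--Morrison, adding $ww^\tr$ keeps $\Phi_{b'}(A+ww^\tr)\le\Phi_b(A)$, and pushes the new eigenvalue above $b'$, provided
\[\frac{w^\tr(A-b'\Id)^{-1}LL^\tr(A-b'\Id)^{-1}w}{\Phi_b(A)-\Phi_{b'}(A)}\ \ge\ 1+w^\tr(A-b'\Id)^{-1}w.\]
To find such a $w=w_j$, I would sum both sides over $j$ and use $\sum_jw_jw_j^\tr=LL^\tr$. The left side sums to a trace of $LL^\tr$ against a square of a resolvent, divided by the potential difference. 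The right side sums to $m+\Phi_{b'}(A)$, which must be split into the part on $\operatorname{range}(A)$ and the part on its orthogonal complement. It then suffices to show that the summed left side is at least the summed right side, for then some $j$ works.

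\textbf{Main obstacle.} The hardest step is this averaged inequality, because $A$ is rank-deficient. On the kernel of $A$ the resolvent is just $-1/b'$. The kernel contribution therefore involves the traces $\operatorname{Tr}(LL^\tr P)$ and $\operatorname{Tr}(LL^\tr P)^2$, where $P$ is the projection onto $\ker A$. One must use $\operatorname{Tr}(LL^\tr P)\ge\|L\|_{\HS}^2-t\|L\|^2$: projecting away $t$ directions removes at most $t\|L\|^2$ of the Hilbert--Schmidt mass. This is exactly where the ratio $\|L\|_{\HS}^2/\|L\|^2$ limits the number of steps $T$. On the range of $A$, I would use the convexity estimate for the potential difference, as in the standard barrier method. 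Finally, I would take $b_0=(1-\delta)\|L\|_{\HS}^2/m$ and $\varepsilon$ of order $\delta$ times the ratio that makes the bookkeeping close. Telescoping gives $b_T=b_0-T\varepsilon$, and with these choices $b_T$ stays above $(1-\delta)^2\|L\|_{\HS}^2/m$.
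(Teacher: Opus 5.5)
Citing Spielman--Srivastava, as you propose, is exactly what the paper does; it gives no proof of its own. Your self-contained sketch follows their barrier argument, but its key one-step criterion is wrong as written.

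Since $b\mapsto\Phi_b(A)$ is increasing (its derivative is $\operatorname{Tr}(L^{\tr}(A-b\Id)^{-2}L)\ge0$), your denominator $D=\Phi_b(A)-\Phi_{b'}(A)$ is positive. On the other hand, $A-b'\Id$ is indefinite: it equals $-b'$ on $\ker A$. Put $s=1+w^{\tr}(A-b'\Id)^{-1}w$. The matrix determinant lemma $\det(A+ww^{\tr}-b'\Id)=\det(A-b'\Id)\,s$ shows that $A+ww^{\tr}$ gains an eigenvalue above $b'$ if and only if $s<0$. Sherman--Morrison gives $\Phi_{b'}(A+ww^{\tr})=\Phi_{b'}(A)-N/s$ with $N=w^{\tr}(A-b'\Id)^{-1}LL^{\tr}(A-b'\Id)^{-1}w\ge0$. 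So what you need is $s<0$ and $N\le-sD$, that is,
\[\frac{N}{\Phi_b(A)-\Phi_{b'}(A)}\le-1-w^{\tr}(A-b'\Id)^{-1}w.\]
This is the reverse of your inequality. Yours would be correct with the negative denominator $\Phi_{b'}(A)-\Phi_b(A)$; it has the shape of the Batson--Spielman--Srivastava condition, where the barrier moves up beneath a positive definite $A$.

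As written, your criterion accepts every $w$ with $s\le0$, no matter how much the potential jumps. Its averaged form is also vacuous. The invariant gives $m+\Phi_{b'}(A)<m+\Phi_{b_0}(0)=m-\frac{m}{1-\delta}<0$, while the summed left side $\operatorname{Tr}\bigl(((A-b'\Id)^{-1}LL^{\tr})^2\bigr)/D$ is nonnegative. So your argument would never stop, which is absurd, since at most $\operatorname{rank}L$ of the vectors $Lv_j$ can be independent. In particular, the obstacle you describe never actually arises in your formulation.

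What has to be proved is the strict reversed average
\[\frac{\operatorname{Tr}\bigl(((A-b'\Id)^{-1}LL^{\tr})^2\bigr)}{\Phi_b(A)-\Phi_{b'}(A)}<-m-\Phi_{b'}(A).\]
This is where the two hypotheses of the Spielman--Srivastava one-step lemma come in: $\Phi_b(A)\le-m-\|L\|^2/\varepsilon$ and $b\|L\|^2<\varepsilon\operatorname{Tr}(LL^{\tr}P)$. Write $(A-b'\Id)^{-1}=X_R-\frac{1}{b'}P$, with $X_R\succeq0$ supported on the range of $A$. Then:
\begin{enumerate}
\item the cross terms in the numerator are nonpositive;
\item the range term is at most $\frac{\|L\|^2}{\varepsilon}$ times the range part of $D$ (your convexity estimate);
\item the kernel term $\frac{1}{b'^2}\operatorname{Tr}\bigl((PLL^{\tr})^2\bigr)$ is at most $\frac{\|L\|^2}{\varepsilon}\cdot\frac{b}{b'}$ times the kernel part $\frac{\varepsilon}{bb'}\operatorname{Tr}(LL^{\tr}P)$ of $D$.
\end{enumerate}
The first hypothesis pays for the factor $\frac{\|L\|^2}{\varepsilon}$. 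The excess coming from $\frac{b}{b'}>1$ is absorbed by the extra $D$ in $-\Phi_{b'}(A)=-\Phi_b(A)+D$, thanks to the second hypothesis. That is where $\operatorname{Tr}(LL^{\tr}P)\ge\|L\|_{\HS}^2-t\|L\|^2$ is used.

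The step size is then pinned down. With your $b_0$, take $\varepsilon=\frac{(1-\delta)\|L\|^2}{\delta m}$, so that $\Phi_{b_0}(0)=-\frac{m}{1-\delta}=-m-\frac{\|L\|^2}{\varepsilon}$. With this choice, the kernel hypothesis $b_0-t\varepsilon<\varepsilon\bigl(\|L\|_{\HS}^2/\|L\|^2-t\bigr)$ reduces to $\delta<1$ and holds at every step. So the bound on $T$ comes from the barrier drop, not from the kernel estimate: $T\varepsilon\le\delta(1-\delta)\|L\|_{\HS}^2/m$ for $T\le\delta^2\|L\|_{\HS}^2/\|L\|^2$. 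This gives $\mu_{\min}>b_T\ge(1-\delta)^2\|L\|_{\HS}^2/m$.
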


\subsection{Spherical designs}
We shall apply Theorem \ref{thm:RI} to a family $\{Z_{x_j}\}_{j=1}^M$ of zonal harmonics in $\SH_k^n$, where $\{x_j\}_{j=1}^M$ is a spherical design.

\begin{defn}[Spherical designs]
Let $t\in\N$. A finite subset $\{x_1,\dots,x_M\}\subset\Sb^n$ is called a spherical $t$-design if
\[\frac1M\sum_{j=1}^MP\left(x_j\right)=\int_{\Sb^n}P(y)\,dy\]
for every polynomial $P$ in $\R^{n+1}$ of degree at most $t$, restricted to $\Sb^n$.
\end{defn}

Bondarenko--Radchenko--Viazovska \cite{BRV} proved the following.
\begin{thm}\label{thm:BRV}
There is a constant $c_n>0$ such that for every $t\in\N$ and every $M\ge c_nt^n$, there is a spherical $t$-design in $\Sb^n$ consisting of $M$ points.
\end{thm}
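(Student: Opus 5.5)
The statement is Theorem~\ref{thm:BRV}, the Bondarenko--Radchenko--Viazovska theorem; the paper only cites it, and the plan below follows the topological-degree strategy of \cite{BRV}. Throughout, let $\mathcal P_t$ be the real space of polynomials of degree at most $t$ restricted to $\Sb^n$ with $\int_{\Sb^n}P\,dy=0$, with the $L^2(\Sb^n)$ inner product. A configuration $\{x_j\}_{j=1}^M$ is a spherical $t$-design if and only if the linear functional $P\mapsto\sum_jP(x_j)$ vanishes on $\mathcal P_t$. By Riesz representation, this functional is $\langle\cdot,F\rangle$ for a unique $F\in\mathcal P_t$. So it suffices to build a continuous map $P\mapsto F(P)$ on a bounded open set $\Omega\subset\mathcal P_t$ containing $0$, where $F(P)$ is the Riesz representative for a configuration $x(P)$ depending continuously on $P$, and to find a zero of $F$.

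\textbf{Topological step.} We use the standard consequence of Brouwer's theorem: if $F:\ol\Omega\to\mathcal P_t$ is continuous and $\langle F(P),P\rangle>0$ for all $P\in\partial\Omega$, then $F$ vanishes somewhere in $\Omega$. We take
\[\Omega=\Bigl\{P\in\mathcal P_t:\int_{\Sb^n}|\nabla P|\,dy<1\Bigr\}.\]

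\textbf{Construction of $x(P)$.}
\begin{enumerate}[(i)]
\item Take an area-regular partition of $\Sb^n$ into $M$ cells $D_1,\dots,D_M$, each of volume $\frac1M$ and diameter $\lesssim M^{-1/n}$. Such partitions exist by known results.
\item Pick a base point $x_j^0\in D_j$ in each cell.
\item Move each point by a short flow in the direction of $\nabla P$, suitably normalized, for example $\dot x=\nabla P(x)/h(|\nabla P(x)|)$ with $h(s)=\max(s,\varepsilon)$, for a time of order $M^{-1/n}$. Call the endpoint $x_j(P)$.
\end{enumerate}
This makes $P\mapsto x_j(P)$ continuous.

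\textbf{Boundary positivity.} We need
\[\langle F(P),P\rangle=\sum_jP\bigl(x_j(P)\bigr)>0\quad\text{on }\partial\Omega.\]
Write
\[\sum_jP\bigl(x_j(P)\bigr)=\sum_j\Bigl[P\bigl(x_j(P)\bigr)-P\bigl(x_j^0\bigr)\Bigr]+\sum_jP\bigl(x_j^0\bigr).\]
\begin{itemize}
\item \emph{Gain from the flow.} Along the flow $P$ increases at rate roughly $|\nabla P|$. So the first sum is $\gtrsim M^{-1/n}\sum_j|\nabla P(x_j^0)|$, up to error terms.
\item \emph{Base configuration.} Since $\int P=0$, we have $\sum_jP(x_j^0)=\sum_j\bigl(P(x_j^0)-M\int_{D_j}P\,dy\bigr)$, which is bounded in absolute value by $\lesssim M^{-1/n}\cdot M\int_{\Sb^n}|\nabla P|\,dy$, again up to errors.
\end{itemize}
Both sums must then be compared to $M\int_{\Sb^n}|\nabla P|\,dy=M$ on $\partial\Omega$.

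\textbf{Main obstacle: the comparison.} The comparison requires a Marcinkiewicz--Zygmund type inequality for polynomials of degree at most $t$: if every cell has diameter $\le c/t$ with $c$ small, then
\[\frac1M\sum_j|\nabla P(y_j)|\approx\int_{\Sb^n}|\nabla P|\,dy\quad\text{for any choice of }y_j\in D_j,\]
and the second-order errors along the flow are controlled. This is exactly where the hypothesis $M\ge c_nt^n$ enters: it makes the diameter $M^{-1/n}$ at most a small multiple of $t^{-1}$.

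I would prove the inequality using Bernstein-type inequalities. These bound the oscillation of $\nabla P$ on a cap of radius $\varepsilon/t$ by $\varepsilon$ times a local maximal average of $|\nabla P|$, and the maximal averages are integrable by a Nikolskii/reproducing-kernel estimate. With the inequality in hand, the gain beats the error, giving positivity on $\partial\Omega$. The topological step then yields $P_*\in\Omega$ with $F(P_*)=0$, and $\{x_j(P_*)\}_{j=1}^M$ is a spherical $t$-design.

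Handling the normalization $h$ near critical points of $P$ while keeping continuity is the delicate technical point; \cite{BRV} treat it with a cutoff of this kind.
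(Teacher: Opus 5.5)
The paper gives no proof of this theorem; it is quoted from \cite{BRV}. Your outline is essentially the Bondarenko--Radchenko--Viazovska argument (Riesz representers on the mean-zero polynomials, the Brouwer-degree lemma on $\Omega=\{P:\int_{\Sb^n}|\nabla P|\,dy<1\}$, an area-regular partition, the gradient flow normalized by $\max(|\nabla P|,\varepsilon)$, and a Marcinkiewicz--Zygmund inequality for $|\nabla P|$ at scale $1/t$), so the plan is sound.

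When you fill it in, two points need care. First, the flow gain and the base-configuration error are both of order $M^{1-1/n}\int_{\Sb^n}|\nabla P|\,dy$, so for the gain to dominate, the flow time must be a sufficiently large multiple of the cell diameter. For example, flow for a time comparable to $1/t$ and choose $c_n$ large so the cells are much smaller. Second, the lower Marcinkiewicz--Zygmund bound must be applied at the moving points $y_j(s)$, which lie within $O(1/t)$ of their cells but not necessarily inside them.
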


\begin{lemma}\label{thm:frame}
Let $\{x_1,\dots,x_M\}\subset\Sb^n$ be a spherical $2k$-design. Then
\[\sum_{j=1}^MZ_{x_j}\otimes Z_{x_j}=\frac{M}{N_k}\Id\]
as an operator on $\SH_k^n$.
\end{lemma}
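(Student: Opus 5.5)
We prove the identity by testing the operator $T=\sum_{j=1}^MZ_{x_j}\otimes Z_{x_j}$ against arbitrary pairs $u,v\in\SH_k^n$. Since $T$ is symmetric on the real space $\SH_k^n$, it suffices to show that
\[\left\langle Tu,v\right\rangle=\frac{M}{N_k}\langle u,v\rangle\quad\text{for all }u,v\in\SH_k^n.\]

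By the definition of $u\otimes u$ and the evaluation identity \eqref{eq:eval}, we have
\[\left\langle Tu,v\right\rangle=\sum_{j=1}^M\left\langle u,Z_{x_j}\right\rangle\left\langle v,Z_{x_j}\right\rangle=\frac{1}{N_k}\sum_{j=1}^Mu(x_j)v(x_j).\]
The function $P=uv$ is the restriction to $\Sb^n$ of the product of two homogeneous polynomials of degree $k$ in $\R^{n+1}$. Hence $P$ is a polynomial of degree $2k$, and the defining property of a spherical $2k$-design gives
\[\frac1M\sum_{j=1}^Mu(x_j)v(x_j)=\int_{\Sb^n}u(y)v(y)\,dy=\langle u,v\rangle.\]
Combining the two displays yields $\langle Tu,v\rangle=\frac{M}{N_k}\langle u,v\rangle$ for all $u,v$, so $T=\frac{M}{N_k}\Id$.

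There is no real obstacle. The only point to check is that the design property applies to $uv$. Here $u$ and $v$ denote the restrictions of their homogeneous extensions, which are genuine polynomials of degree $k$, so $uv$ is indeed a polynomial of degree at most $2k$ restricted to $\Sb^n$.

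Alternatively, one can take $v=u$ to obtain the quadratic form $\langle Tu,u\rangle=\frac{M}{N_k}\|u\|^2$. Since $T$ is symmetric, polarization then recovers the full identity.
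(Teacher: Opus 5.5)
Your proof is correct and is essentially the paper's argument: the paper computes the quadratic form $\langle Tu,u\rangle=\frac{1}{N_k}\sum_j u(x_j)^2$, applies the design property to $u^2$, and then uses the symmetry of $T$, which is exactly your polarization alternative. One small remark: your bilinear version, testing against all pairs $u,v$, determines $T$ directly, so the appeal to symmetry in your first paragraph is not actually needed there.
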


\begin{proof}
Let $u\in\SH_k^n$. Then $u^2$ is the restriction to $\Sb^n$ of a polynomial of degree $2k$ in $\R^{n+1}$. Hence, by \eqref{eq:eval} and the spherical design property,
\[\left\langle\left(\sum_{j=1}^MZ_{x_j}\otimes Z_{x_j}\right)u,u\right\rangle=\sum_{j=1}^M\left\langle u,Z_{x_j}\right\rangle^2=\frac{1}{N_k}\sum_{j=1}^Mu(x_j)^2=\frac{M}{N_k}\int_{\Sb^n}u(y)^2\,dy=\frac{M}{N_k}\|u\|_{L^2(\Sb^n)}^2.\]
Since the operator on the left-hand side is symmetric, the lemma follows.
\end{proof}

\subsection{An orthonormal basis}
The following elementary lemma allows us to pass from an orthonormal set of at least $\frac N2$ vectors in a space of dimension $N$ to a whole orthonormal basis, while preserving a uniform lower bound for the inner products used above.

\begin{lemma}\label{thm:completion}
Let $\Hc$ be a real Hilbert space of dimension $N$. Suppose that $\{u_j\}_{j=1}^M$ is an orthonormal set in $\Hc$ with $M\ge\frac N2$. Assume that $w_1,\dots,w_M\in\Hc$ satisfy
\[\left\langle u_j,w_j\right\rangle\ge c>0\quad\text{for all }j=1,\dots,M.\]
Then there is an orthonormal basis $\{v_j\}_{j=1}^N$ of $\Hc$ such that, for each $j=1,\dots,N$, there is an index $l(j)\in\{1,\dots,M\}$ with
\[\left\langle v_j,w_{l(j)}\right\rangle\ge\frac{c}{\sqrt2}.\]
\end{lemma}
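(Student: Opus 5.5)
We will complete the orthonormal set and then rotate within pairs. Since $M\ge\frac N2$, the orthogonal complement of $\Span\{u_j\}_{j=1}^M$ has dimension $N-M\le M$. Choose an orthonormal basis $\{e_1,\dots,e_{N-M}\}$ of this complement, so that $\{u_1,\dots,u_M,e_1,\dots,e_{N-M}\}$ is an orthonormal basis of $\Hc$. Pair $e_i$ with $u_i$ for $i=1,\dots,N-M$. The remaining vectors $u_{N-M+1},\dots,u_M$ are kept unchanged, and for them we take $l(j)=j$, so that $\langle v_j,w_j\rangle\ge c\ge\frac{c}{\sqrt2}$.

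For each pair $(u_i,e_i)$, we replace it by the rotated pair
\[v=\cos\theta\,u_i+\sin\theta\,e_i,\qquad v'=-\sin\theta\,u_i+\cos\theta\,e_i,\]
with an angle $\theta$ chosen depending on $i$. This is an orthogonal change of basis inside the plane $\Span\{u_i,e_i\}$. Because the planes for different $i$ are mutually orthogonal and orthogonal to the untouched $u_j$, the resulting family is still an orthonormal basis of $\Hc$. Both $v$ and $v'$ are assigned $l=i$. Set $a=\langle u_i,w_i\rangle\ge c$ and $b=\langle e_i,w_i\rangle\in\R$. Then
\[\langle v,w_i\rangle=a\cos\theta+b\sin\theta,\qquad \langle v',w_i\rangle=-a\sin\theta+b\cos\theta.\]

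The only real step is choosing $\theta$ so that both quantities are at least $\frac{c}{\sqrt2}$. First take $\theta=-\frac\pi4$ in the rotation above. This gives $\langle v,w_i\rangle=\frac{a-b}{\sqrt2}$ and $\langle v',w_i\rangle=\frac{a+b}{\sqrt2}$. If $b\ge0$, the second is at least $\frac{a}{\sqrt2}\ge\frac{c}{\sqrt2}$, but the first may be small, so a fixed angle does not work and we need the sign freedom. Use the fact that multiplying a basis vector by $-1$ preserves orthonormality. Replace $v$ by $\pm v$ so that its inner product becomes $\frac{|a-b|}{\sqrt2}$, and similarly $v'$ gives $\frac{|a+b|}{\sqrt2}$.

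It remains to find a rotation for which both absolute values are large. Write $(a,b)=r(\cos\phi,\sin\phi)$ with $r\ge a\ge c$. The two inner products are $r\cos(\theta-\phi)$ and $-r\sin(\theta-\phi)$. Choosing $\theta=\phi-\frac\pi4$ and flipping the sign of $v'$ if needed gives both equal to $\frac{r}{\sqrt2}\ge\frac{c}{\sqrt2}$. This choice of angle is the main (mild) obstacle, and once it is made the lemma follows by defining $l(j)$ as described.
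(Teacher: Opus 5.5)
Your proof is correct and is essentially the paper's argument: the paper first rotates each pair to $s_i,t_i$ (your rotation by $\phi$, aligning with $w_i$) and then takes $(s_i\pm t_i)/\sqrt2$, which is exactly your rotation by $\theta=\phi-\frac\pi4$ together with its orthogonal partner, so both inner products equal $r_i/\sqrt2\ge c/\sqrt2$. With this $\theta$ the two inner products are already $+\frac{r}{\sqrt2}$, so the sign flip and the preliminary $\theta=-\frac\pi4$ discussion are unnecessary and can be removed.
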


\begin{proof}
Complete $\{u_j\}_{j=1}^M$ to an orthonormal basis by adding $h_1,\dots,h_{N-M}$. Since $N-M\le M$, we can pair $h_i$ with $u_i$ for $i=1,\dots,N-M$. For each pair, set
\[a_i=\left\langle u_i,w_i\right\rangle,\quad b_i=\left\langle h_i,w_i\right\rangle,\quad r_i=\sqrt{a_i^2+b_i^2}\ge c.\]
Define
\[s_i=\frac{a_iu_i+b_ih_i}{r_i}\quad\text{and}\quad t_i=\frac{-b_iu_i+a_ih_i}{r_i}.\]
Then $s_i$ and $t_i$ are orthonormal and span the same subspace as $u_i$ and $h_i$. Moreover,
\[\left\langle s_i,w_i\right\rangle=r_i\quad\text{and}\quad\left\langle t_i,w_i\right\rangle=0.\]
Consequently, the two vectors
\[v_i^+=\frac{s_i+t_i}{\sqrt2}\quad\text{and}\quad v_i^-=\frac{s_i-t_i}{\sqrt2}\]
are orthonormal and satisfy
\[\left\langle v_i^+,w_i\right\rangle=\left\langle v_i^-,w_i\right\rangle=\frac{r_i}{\sqrt2}\ge\frac{c}{\sqrt2}.\]
Replacing each pair $u_i,h_i$ by $v_i^+,v_i^-$ and retaining the unpaired $u_j$ gives the required orthonormal basis.
\end{proof}

\section{Proof of Theorem \ref{thm:MaxSHhigh}}\label{sec:high}
Fix an integer $k\ge1$. Set
\[\delta=\frac{\sqrt3}{2}\quad\text{and}\quad c_0=1-\delta>0.\]

\subsection{Selection of the poles} 
By Theorem \ref{thm:BRV}, there is a spherical $2k$-design $\{x_1,\dots,x_{M_0}\}\subset\Sb^n$ with $M_0\ge\max\{c_n(2k)^n,N_k\}$. Since $N_k\le M_0$, we may fix a linear isometry of $\SH_k^n$ onto a subspace of $\R^{M_0}$ and regard $\SH_k^n\subset\R^{M_0}$. We write $\langle\cdot,\cdot\rangle$ and $\|\cdot\|$ for the Euclidean inner product and norm on $\R^{M_0}$ as well; on $\SH_k^n$ they agree with the inner product and norm of $L^2(\Sb^n)$.

Let $\{e_j\}_{j=1}^{M_0}$ be the standard basis of $\R^{M_0}$. Then 
\[\sum_{j=1}^{M_0}e_je_j^\tr=\Id.\]
Let $L:\R^{M_0}\to\R^{M_0}$ be the linear map determined by
\[Le_j=Z_{x_j}\quad\text{for }j=1,\dots,M_0.\]
Since $\|Z_{x_j}\|_{L^2(\Sb^n)}=1$,
\[\|L\|_{\HS}^2=\sum_{j=1}^{M_0}\left\|Le_j\right\|^2=\sum_{j=1}^{M_0}\left\|Z_{x_j}\right\|_{L^2(\Sb^n)}^2=M_0.\]
Furthermore, for any $v\in\R^{M_0}$,
\[LL^\tr v=\sum_{j=1}^{M_0}\left\langle L^\tr v,e_j\right\rangle Le_j=\sum_{j=1}^{M_0}\left\langle v,Z_{x_j}\right\rangle Z_{x_j}.\]
Hence, $LL^\tr$ vanishes on the orthogonal complement of $\SH_k^n$ in $\R^{M_0}$, while its restriction to $\SH_k^n$ is $\sum_{j=1}^{M_0}Z_{x_j}\otimes Z_{x_j}$. By Lemma \ref{thm:frame},
\[\|L\|^2=\left\|LL^\tr\right\|=\frac{M_0}{N_k}.\]
Applying Theorem \ref{thm:RI} with $v_j=e_j$, $m=d=M_0$, and $\delta=\frac{\sqrt3}{2}$, we obtain a subset $J\subset\{1,\dots,M_0\}$ with
\[M:=|J|\ge\left\lfloor\frac{\delta^2\|L\|_{\HS}^2}{\|L\|^2}\right\rfloor=\left\lfloor\frac34N_k\right\rfloor\]
such that $\{Z_{x_j}\}_{j\in J}$ is linearly independent and
\begin{equation}\label{eq:RIout}
\mu_{\min}\left(\sum_{j\in J}Z_{x_j}\otimes Z_{x_j}\right)>\frac{(1-\delta)^2\|L\|_{\HS}^2}{M_0}=c_0^2
\end{equation}
on $\Span\{Z_{x_j}\}_{j\in J}$. After relabeling, we may assume that $J=\{1,\dots,M\}$.

\subsection{Orthonormalization} 
Denote by $E$ the $M\times M$ real symmetric matrix
\[E=\left(\left\langle Z_{x_j},Z_{x_l}\right\rangle\right)_{j,l=1}^M,\]
so that $E_{jj}=1$ by \eqref{eq:eval}. Let $A:\R^M\to\SH_k^n$ be the linear map with $Ae_j=Z_{x_j}$ for $j=1,\dots,M$, where $\{e_j\}_{j=1}^M$ now denotes the standard basis of $\R^M$. Denote by $A^\tr:\SH_k^n\to\R^M$ its adjoint. Then
\[A^\tr A=E\quad\text{and}\quad AA^\tr=\sum_{j=1}^MZ_{x_j}\otimes Z_{x_j},\]
and the nonzero eigenvalues of $A^\tr A$ and of $AA^\tr$ coincide with multiplicities. Since $\{Z_{x_j}\}_{j=1}^M$ is linearly independent, $A$ is injective, $E$ is positive definite, and the image of $A$ is $\Span\{Z_{x_j}\}_{j=1}^M$. Hence, all the $M$ eigenvalues of $E$ are eigenvalues of $AA^\tr$ restricted to $\Span\{Z_{x_j}\}_{j=1}^M$. As a consequence of \eqref{eq:RIout},
\begin{equation}\label{eq:mumin}
\mu_{\min}(E)>c_0^2.
\end{equation}
Let $F=E^{-\frac12}$ be the unique positive definite symmetric square root of $E^{-1}$; see, for example, Horn--Johnson \cite[Theorem 7.2.6]{HJ}. Define
\[u_j=\sum_{l=1}^MF_{jl}Z_{x_l}\quad\text{for }j=1,\dots,M.\]
Then, as in \cite{Ha1},
\[\left\langle u_j,u_{j'}\right\rangle=\left\langle\sum_{l=1}^MF_{jl}Z_{x_l},\sum_{l'=1}^MF_{j'l'}Z_{x_{l'}}\right\rangle=\sum_{l,l'=1}^MF_{jl}F_{j'l'}E_{ll'}=\left(FEF\right)_{jj'}=\Id_{jj'}\quad\text{for }j,j'=1,\dots,M,\]
so $\{u_j\}_{j=1}^M\subset\SH_k^n$ is an orthonormal set.

\subsection{$L^\infty$ estimate of $u_j$} 
By \eqref{eq:ZZ},
\[u_j\left(x_j\right)=\sum_{l=1}^MF_{jl}Z_{x_l}\left(x_j\right)=\sqrt{N_k}\sum_{l=1}^MF_{jl}E_{lj}=\sqrt{N_k}\left(FE\right)_{jj}=\sqrt{N_k}\left(E^{\frac12}\right)_{jj}.\]
The matrix 
\[E^{\frac12}-\mu_{\min}(E)^{\frac12}\Id\] 
is positive semidefinite, so its diagonal entries are non-negative. Hence, by \eqref{eq:mumin},
\begin{equation}\label{eq:peak}
u_j\left(x_j\right)=\sqrt{N_k}\left(E^{\frac12}\right)_{jj}\ge\sqrt{N_k}\,\mu_{\min}(E)^{\frac12}>c_0\sqrt{N_k}.
\end{equation}
Therefore, by \eqref{eq:dim},
\[\left\|u_j\right\|_{L^\infty(\Sb^n)}\ge u_j\left(x_j\right)>c_0\sqrt{N_k}\gtrsim k^{\frac{n-1}{2}}.\]

\subsection{$L^p$ estimate of $u_j$} 
Let $C=C(n)\ge1$ be the constant in Lemma \ref{thm:grad}. Since $\|u_j\|_{L^2(\Sb^n)}=1$, that lemma gives $|\nabla u_j|\le Ck^{\frac{n+1}{2}}$ on $\Sb^n$. Set
\[r_k=\min\left\{\frac{c_0\sqrt{N_k}}{2Ck^{\frac{n+1}{2}}},\frac12\right\}\quad\text{and}\quad B_j=\left\{y\in\Sb^n:\dist\left(x_j,y\right)\le r_k\right\}.\]
Then, for $y\in B_j$, the mean value theorem along a minimizing geodesic from $x_j$ to $y$ and \eqref{eq:peak} give
\[\left|u_j(y)\right|\ge u_j\left(x_j\right)-Ck^{\frac{n+1}{2}}\dist\left(x_j,y\right)\ge\frac{c_0}{2}\sqrt{N_k}.\]
By \eqref{eq:dim}, $r_k\gtrsim k^{-1}$. Hence,
\[\Vol\left(B_j\right)\gtrsim k^{-n}.\]
Therefore, for $p_n\le p<\infty$,
\[\left\|u_j\right\|_{L^p(\Sb^n)}^p\ge\int_{B_j}\left|u_j(y)\right|^p\,dy\ge\left(\frac{c_0}{2}\right)^pN_k^{\frac p2}\Vol\left(B_j\right)\gtrsim N_k^{\frac p2}k^{-n},\]
and, by \eqref{eq:dim},
\[\left\|u_j\right\|_{L^p(\Sb^n)}\gtrsim N_k^{\frac12}k^{-\frac np}\approx k^{\frac{n-1}{2}-\frac np}.\]
Combining the $L^\infty$ and $L^p$ estimates above with the upper bound in \eqref{eq:Sogge} for $p_n\le p\le\infty$, we conclude that
\[\left\|u_j\right\|_{L^p(\Sb^n)}\approx k^{\frac{n-1}{2}-\frac np}\quad\text{for all }j=1,\dots,M\text{ and }p_n\le p\le\infty.\]

\subsection{An orthonormal basis}
Since $n\ge2$ and $k\ge1$, we have $N_k\ge3$, and therefore
\[M\ge\left\lfloor\frac34N_k\right\rfloor\ge\left\lceil\frac{N_k}{2}\right\rceil.\]
By \eqref{eq:eval} and \eqref{eq:peak},
\[\left\langle u_j,Z_{x_j}\right\rangle>c_0\quad\text{for all }j=1,\dots,M.\]
Apply Lemma \ref{thm:completion} with $\Hc=\SH_k^n$, $w_j=Z_{x_j}$, and $c=c_0$. We obtain an orthonormal basis $\{v_j\}_{j=1}^{N_k}$ of $\SH_k^n$ such that, by \eqref{eq:eval},
\[v_j\left(x_{l(j)}\right)=\sqrt{N_k}\left\langle v_j,Z_{x_{l(j)}}\right\rangle\ge\frac{c_0}{\sqrt2}\sqrt{N_k}\quad\text{for all }j=1,\dots,N_k.\]
The gradient estimate and the ball argument above, with $c_0$ replaced by $\frac{c_0}{\sqrt2}$, now give
\[\left\|v_j\right\|_{L^p(\Sb^n)}\approx k^{\frac{n-1}{2}-\frac np}\quad\text{for all }j=1,\dots,N_k\text{ and }p_n\le p\le\infty.\]
The constants depend only on $n$ and $p$, since $c_0$ is fixed. Thus, $\{v_j\}_{j=1}^{N_k}$ is the required basis, and this finishes the proof of Theorem \ref{thm:MaxSHhigh}.

\section{Proof of Theorem \ref{thm:MaxSHlow}}\label{sec:low}
Fix an integer $k\ge1$. Set
\[\eta=\frac18\quad\text{and}\quad\delta=\sqrt{\frac34(1+\eta)}=\sqrt{\frac{27}{32}}\in(0,1).\]
By Lemma \ref{thm:HWframe}, there are rotations $R_1,\dots,R_{M_0}$ with $M_0\ge N_k$ such that, writing $Q_j=Q_{R_j}$,
\begin{equation}\label{eq:HWop}
\left\|\sum_{j=1}^{M_0}Q_j\otimes Q_j\right\|\le(1+\eta)\frac{M_0}{N_k}.
\end{equation}
As in Section \ref{sec:high}, we regard $\SH_k^n$ as a subspace of $\R^{M_0}$, and we write $\langle\cdot,\cdot\rangle$ and $\|\cdot\|$ for the Euclidean inner product and norm on $\R^{M_0}$ as well. Let $\{e_j\}_{j=1}^{M_0}$ be the standard basis of $\R^{M_0}$ and $L:\R^{M_0}\to\R^{M_0}$ be the linear map determined by
\[Le_j=Q_j\quad\text{for }j=1,\dots,M_0.\]
Since $\|Q_j\|_{L^2(\Sb^n)}=1$,
\[\|L\|_{\HS}^2=M_0.\]
As in Section \ref{sec:high}, $LL^\tr$ vanishes on the orthogonal complement of $\SH_k^n$ in $\R^{M_0}$, while its restriction to $\SH_k^n$ is $\sum_{j=1}^{M_0}Q_j\otimes Q_j$. Hence, by \eqref{eq:HWop},
\[\|L\|^2=\left\|LL^\tr\right\|=\left\|\sum_{j=1}^{M_0}Q_j\otimes Q_j\right\|\le(1+\eta)\frac{M_0}{N_k}.\]
Applying Theorem \ref{thm:RI} with $v_j=e_j$, $m=d=M_0$, and the above $\delta$, we obtain a subset $J\subset\{1,\dots,M_0\}$ with
\[M:=|J|\ge\left\lfloor\frac{\delta^2\|L\|_{\HS}^2}{\|L\|^2}\right\rfloor\ge\left\lfloor\frac{\delta^2}{1+\eta}N_k\right\rfloor=\left\lfloor\frac34N_k\right\rfloor\]
such that $\{Q_j\}_{j\in J}$ is linearly independent and
\begin{equation}\label{eq:HWRI}
\mu_{\min}\left(\sum_{j\in J}Q_j\otimes Q_j\right)>(1-\delta)^2
\end{equation}
on $\Span\{Q_j\}_{j\in J}$. After relabeling, we may assume that $J=\{1,\dots,M\}$.

Denote
\[E=\left(\left\langle Q_j,Q_l\right\rangle\right)_{j,l=1}^M.\]
As in Section \ref{sec:high}, $E$ is positive definite and \eqref{eq:HWRI} implies that
\[\mu_{\min}(E)>(1-\delta)^2.\]
Let $F=E^{-\frac12}$ and define
\[u_j=\sum_{l=1}^MF_{jl}Q_l\quad\text{for }j=1,\dots,M.\]
Then $FEF=\Id$, so $\{u_j\}_{j=1}^M$ is an orthonormal set in $\SH_k^n$. Moreover, as in \eqref{eq:peak},
\begin{equation}\label{eq:HWpair}
\left\langle u_j,Q_j\right\rangle=\sum_{l=1}^MF_{jl}E_{lj}=\left(FE\right)_{jj}=\left(E^{\frac12}\right)_{jj}\ge\mu_{\min}(E)^{\frac12}>1-\delta.
\end{equation}
Let $2<p\le p_n$ and $p'=\frac{p}{p-1}$. By Lemma \ref{thm:HWnorm},
\[\|Q_j\|_{L^{p'}(\Sb^n)}\approx k^{\frac{n-1}{4}-\frac{n-1}{2p'}}=k^{-\frac{n-1}{4}+\frac{n-1}{2p}}=k^{-\sigma(p)}.\]
Hence, by H\"older's inequality and \eqref{eq:HWpair},
\[1-\delta<\left|\left\langle u_j,Q_j\right\rangle\right|\le\|u_j\|_{L^p(\Sb^n)}\|Q_j\|_{L^{p'}(\Sb^n)}\lesssim k^{-\sigma(p)}\|u_j\|_{L^p(\Sb^n)}.\]
Therefore,
\[\|u_j\|_{L^p(\Sb^n)}\gtrsim k^{\sigma(p)}=k^{\frac{n-1}{4}-\frac{n-1}{2p}}.\]
Combining this estimate with \eqref{eq:Sogge}, we conclude that
\[\|u_j\|_{L^p(\Sb^n)}\approx k^{\frac{n-1}{4}-\frac{n-1}{2p}}\quad\text{for all }j=1,\dots,M\text{ and }2<p\le p_n.\]
As in Section \ref{sec:high}, $M\ge\lfloor\frac34N_k\rfloor\ge\lceil\frac{N_k}{2}\rceil$. Apply Lemma \ref{thm:completion} with $\Hc=\SH_k^n$, $w_j=Q_j$, and $c=1-\delta$, using \eqref{eq:HWpair}. This gives an orthonormal basis $\{v_j\}_{j=1}^{N_k}$ of $\SH_k^n$ such that
\[\left\langle v_j,Q_{l(j)}\right\rangle\ge\frac{1-\delta}{\sqrt2}\quad\text{for all }j=1,\dots,N_k.\]
For every $2<p\le p_n$, H\"older's inequality and Lemma \ref{thm:HWnorm} therefore give
\[\frac{1-\delta}{\sqrt2}\le\left|\left\langle v_j,Q_{l(j)}\right\rangle\right|\le\left\|v_j\right\|_{L^p(\Sb^n)}\left\|Q_{l(j)}\right\|_{L^{p'}(\Sb^n)}\lesssim k^{-\sigma(p)}\left\|v_j\right\|_{L^p(\Sb^n)}.\]
Combining this with \eqref{eq:Sogge}, we obtain
\[\left\|v_j\right\|_{L^p(\Sb^n)}\approx k^{\frac{n-1}{4}-\frac{n-1}{2p}}\quad\text{for all }j=1,\dots,N_k\text{ and }2<p\le p_n.\]
The constants depend only on $n$ and $p$, since $\delta$ is fixed. Thus, $\{v_j\}_{j=1}^{N_k}$ is the required basis, and this finishes the proof of Theorem \ref{thm:MaxSHlow}.

\end{document}